\def\veps{\varepsilon}
\def\vp{\varphi}
\def\eq#1{(\ref{#1})}
\def\nn{\nonumber}
\def\({\left(\begin{array}{cccccc}}
\def\){\end{array}\right)}
\def\eq#1{(\ref{#1})}
\def\nn{\nonumber}
\def\({\left(\begin{array}{cccccc}}
\def\){\end{array}\right)}
\def\bes{\begin{eqnarray}}
\def\ees{\end{eqnarray}}
\newcommand{\ba}{\overset{\raisebox{0pt}[0pt][0pt]{\text{\raisebox{-.5ex}{\scriptsize$\leftharpoonup$}}}}}
\newcommand{\fa}{\overset{\raisebox{0pt}[0pt][0pt]{\text{\raisebox{-.5ex}{\scriptsize$\rightharpoonup$}}}}}
\newcommand{\del}{\partial}
\newcommand{\beq}{\begin{equation}}
\newcommand{\eeq}{\end{equation}}
\newcommand{\bea}{\begin{eqnarray}}
\newcommand{\eea}{\end{eqnarray}}
\newcommand{\beann}{\begin{eqnarray*}}
\newcommand{\eeann}{\end{eqnarray*}}
\newcommand{\lam}{\ensuremath{\lambda}}
\newcommand{\RR}{\mathbb{R}}
\newcommand{\bp}{\begin{proof}}
\newcommand{\ep}{\end{proof}}
\newcommand{\nquad}{\negthickspace\negthickspace
\negthickspace\negthickspace}
\newcommand{\nqquad}{\nquad\nquad}
\DeclareMathOperator{\sgn}{sgn}
\DeclareMathOperator{\var}{Var}
\newtheorem{theorem}{Theorem}[section]
\newtheorem{proposition}[theorem]{Proposition}
\newtheorem{lemma}[theorem]{Lemma}
\newtheorem{definition}[theorem]{Definition}
\newtheorem{remark}[theorem]{Remark}
\numberwithin{equation}{section}
\begin{document}

\title{TVD fields and isentropic gas flow}

\author{Geng Chen}
\address{G.~Chen, Department of Mathematics, Penn State University, University Park, 
State College, PA 16802, USA ({\tt chen@math.psu.edu}).}

\author{Helge Kristian Jenssen}
\address{ H.~K.~Jenssen, Department of Mathematics, Penn State University, University Park, 
State College, PA 16802, USA ({\tt jenssen@math.psu.edu}).}
\thanks{H.~K.~Jenssen was partially supported by NSF grants DMS-1009002.}

\date{\today}
\begin{abstract}
	Little is known about global existence of large-variation solutions to Cauchy problems 
	for systems of conservation laws in one space dimension. Besides results for
	$L^\infty$ data via compensated compactness, the existence of global BV solutions 
	for arbitrary BV data remains an outstanding open problem. 
	In particular, it is not known if isentropic gas dynamics admits an a priori variation bound 
	which applies to all BV data.
		
	In a few cases such results are available: scalar equations,
	Temple class systems, $2\times 2$-systems satisfying Bakhvalov's condition,
	and, in particular, isothermal gas dynamics. In each of these cases 
	the equations admit a TVD (Total Variation Diminishing) field: 
	a scalar function defined on state space whose spatial 
	variation along entropic solutions does not increase in time. 
	
	In this paper we consider strictly hyperbolic $2\times 2$-systems and derive a representation  
	result for scalar fields that are TVD across all pairwise wave interactions, when the latter are  
	resolved as in the Glimm scheme. We then use this to show that isentropic gas 
	dynamics with a $\gamma$-law pressure function does not admit any nontrivial 
	TVD field of this type.
\end{abstract}

\maketitle
\tableofcontents

Key words: total variation diminishing, isentropic gas dynamics.

MSC 2010: 76N15, 35L65, 35L67

\section{Introduction}
Consider the one-dimensional Cauchy problem for the $p$-system in Lagrangian 
coordinates $(t,X)$:
\bea
	\tau_t-u_X &=& 0\label{tau}\\
	u_t+p(\tau)_X &=&0.\label{mom}
\eea
We are primarily interested in the cases 
\beq\label{p_syst}
	p(\tau)=\tau^{-\gamma},\qquad \gamma=const.\geq 1.
\eeq
In gas dynamics $\tau$ denotes specific volume, $u$ is the particle velocity, $p$ is pressure,
and $\gamma$ is the adiabatic exponent. The initial data $\tau_0$ and $u_0$ are prescribed,
and the state space is the open, right half-plane $\RR^+\times\RR=\{(\tau,u)\,|\, \tau>0\}$.

The case $\gamma=1$ models isothermal gas flow, while $\gamma>1$ models isentropic
flow of an ideal, polytropic gas. The former case was resolved by Nishida \cite{nish_68} who 
established global-in-time 
existence of weak, entropic solutions for any BV-data. The proof employs the Glimm scheme
\cite{gl} and provides a uniform variation bound for the solution by exploiting translation invariance 
of the shock curves when $\gamma=1$. Bakhvalov \cite{bakh_70} generalized Nishida's analysis 
by formulating conditions on wave interactions in systems of the form \eq{tau}-\eq{mom} which 
guarantee uniform BV-bounds. These conditions do not hold for \eq{tau}-\eq{p_syst}.

Several large variation results have been established for isentropic flow, see \cites{ns,diperna_73}. 
These require that $(\gamma-1)\times \var(\tau_0,u_0)$ is sufficiently small. Results of the
latter type have been established for the full Euler system as well, see 
\cites{liu_77a,liu_77b,temple_81}. Also, Glimm \& Lax
\cite{glimm_lax_70} established global existence and decay for a class of systems (including 
isentropic gas dynamics) requiring only that the {\em oscillation} of the initial data be small.
Finally, the method of compensated compactness has been used to establish 
global weak solutions for arbitrary $L^\infty$ data, \cites{diperna_83a, diperna_83b, dcl_89}.

On the other hand it remains an open problem to establish, or rule out, uniform variation 
bounds for solutions \eq{tau}-\eq{p_syst}, when $\gamma>1$, 
for general BV data. By this we mean bounds of the form\footnote{Throughout the paper 
``$\var$'' denotes total variation with respect to the spatial variable $X$.}  
\[\var U(t,\cdot)\leq C_0\qquad\text{for all times $t\geq 0$,}\]
where $C_0$ depends only on the initial data and $U=(\tau,u)$.

For near-equilibrium solutions, Glimm's theorem \cite{gl} establishes such an estimate for 
systems of conservation laws, provided the initial data have sufficiently small total variation. 
The proof relies on the so-called Glimm functional, a non-local quantity which is made 
to decrease in time through a careful balance of increase in variation against decrease in 
potential for future interaction. 

In contrast to this let us consider three cases where existence results for large-variation data 
are available without the use of a Glimm functional:
\begin{enumerate}
	\item Scalar equations: $v_t+f(v)_X=0$, $v(t,x)\in \RR$;
	\item Temple class systems: i.e.\ systems equipped with a full set of Riemann 
	coordinates and such that shock curves and rarefaction curves coincide
	(Temple \cite{temple_83} introduced these systems and characterized all possible 
	$2\times 2$-systems of this type; electrophoresis \cite{daf} provides an example); 
	\item Isothermal gas dynamics ($\gamma=1$), and more generally
	systems satisfying Bakhvalov's conditions (see \cites{nish_68,bakh_70,diperna_73,liu_77b}).
\end{enumerate}
For a scalar equation it is well-known that the total variation of the solution is non-increasing
in time \cite{daf}. Temple class systems enjoy the same property provided the variation of the solution
is measured correctly, namely in terms of changes in the Riemann coordinates. For 
isothermal gas-dynamics, which is not of Temple class, Nishida \cite{nish_68} defined the functional that 
records variation of one Riemann coordinate $r$ across backward shocks, plus variation of another
Riemann coordinate $s$ across forward shocks, i.e.\ 
\beq\label{nish_fncl}
	N(t):=\underset{\ba S}{\var}\, r(t) + \underset{\fa S}{\var}\, s(t).
\eeq 
It turns out that, for the specific system of isothermal gas dynamics, and more generally for systems
satisfying Bakhvalov's conditions \cite{bakh_70}, the property that $N(t)$ is non-increasing is equivalent 
to the statement that the functional 
\beq\label{liu_fncl}
	L(t):=\var(s(t)-r(t))
\eeq 
is non-increasing; see Theorem 2.3 in \cite{liu_77b}.

By exploiting the existence of scalar fields whose variation is non-increasing along solutions, 
global existence for general BV data has been established for each of the cases (1)-(3). 
(In the case of (3) it is also  important that the Riemann invariants vary in opposite directions 
across rarefactions and shocks.) It is therefore natural to ask if such TVD\footnote{I.e.\ ``Total Variation 
Diminishing;'' this allows for fields whose variation is just non-increasing in time.} fields exist also for isentropic flow. 
The main results of the present work are, roughly stated, the following.
\begin{itemize}
	\item[I.] Representation result: any TVD field $\vp$ for a strictly hyperbolic 
	$2\times 2$-system is of the form $\theta(s)-\psi(r)$, where $r$ and $s$ are 
	Riemann invariants; see Theorem \ref{repr}.
	\item[II.] Non-existence result: the system \eq{tau}-\eq{p_syst} for 
	isentropic gas dynamics, with $\gamma>1$, does not admit any non-trivial TVD field; 
	see Theorem \ref{non_exist}.
\end{itemize}
In what follows we provide a precise definition of TVD fields for general systems. 
We then establish part I above under some mild regularity conditions on the TVD field. 
Finally we consider isentropic gas dynamics and prove part II.
Both results are based on a study of pairwise wave interactions and apply to ``Glimm-type'' TVD 
fields; see Section \ref{tvd_fields}.

\medskip

\section{TVD fields for strictly hyperbolic $2\times 2$ - systems}
\subsection{Systems and assumptions}\label{syst}
We consider a strictly hyperbolic system of two equations for the 
unknown $U(t,X)\in\mathcal U^\text{open}\subset\RR^{2\times 1}$:
\beq\label{claw}
	U_t+f(U)_X=0, \qquad t\geq 0,\, X\in\RR.
\eeq
Let the diagonalization of $Df$ be 
\beq\label{e_struct}
	Df(U)=\left[\, \ba R(U)\,\vline\, \fa R(U)\,\right]
	\left[\begin{array}{cc}
	\ba \lam(U) & 0\\
	0 & \fa \lam(U)
	\end{array}\right]
	\left[\begin{array}{c}
	\ba L(U)\\
	\hline
	\fa L(U)
	\end{array}\right],
\eeq
where $\ba \lam(U)<\fa \lam(U)$ denote the slow and fast characteristic speeds. The corresponding
right eigenvectors $\ba R,\, \fa R\in\RR^{2\times 1}$ and left eigenvectors 
$\ba L,\, \fa L\in\RR^{1\times 2}$ satisfy
\[\ba R\cdot \fa L\equiv 0, \qquad \fa R\cdot \ba L\equiv 0.\]
For the following terminology we refer to \cite{daf}.
We assume that each characteristic field of \eq{claw} is either genuinely 
non-linear or linearly degenerate throughout $\mathcal U$. We will also assume that any 
Riemann problem $(U_l,U_r)$, with $U_l,U_r\in\mathcal U$, has a unique, self-similar 
solution consisting of one slow elementary wave (i.e., shock, contact, or centered rarefaction)
and one fast elementary wave connecting $U_l$ to $U_r$ through a middle state. 
The eigenvectors are normalized according to 
\beq\label{nrmlzd}
	\nabla \ba\lam\cdot\ba R\geq 0,\qquad \nabla \fa\lam \cdot\fa R\geq 0,\qquad 
	\ba R \cdot\ba L\equiv 1,\qquad \ \fa R \cdot\fa L\equiv 1.
\eeq
We denote the corresponding wave curves through a base point (left state) 
$\bar U$ by $\ba W(\veps;\bar U)$ and $\fa W(\veps;\bar U)$, $\veps \in\RR$. The parameter
value $\veps$ is also referred to as the {\em strength} of the waves $(\bar U,\ba W(\veps;\bar U))$
and $(\bar U,\fa W(\veps;\bar U))$. In this section 
we assume that the wave curves are parametrized such that: 
\begin{itemize}
	\item $\ba W(0;\bar U)=\bar U$,
	\item $\veps>0$ gives the states $\ba W(\veps;\bar U)$ on the right of backward 
	rarefaction waves with left state $\bar U$,
	\item $\veps<0$ gives the states $\ba W(\veps;\bar U)$ on the right of backward shock
	waves with left state $\bar U$.
\end{itemize}
Similarly for $\fa W(\veps;\bar U)$. For now we do not specify further the parametrizations: 
$\ba R$ and $\fa R$ denote any fixed versions of the eigen-fields (subject to \eq{nrmlzd}). 
We shall later choose these to commute.

For any fixed choice of the eigen-fields $\ba R$, $\fa R$ we introduce Riemann coordinates $r$ 
and $s$ satisfying 
\beq\label{riem1}
	\nabla r\cdot\ba R\equiv 1,\qquad \nabla r \cdot \fa R\equiv 0,
\eeq
\beq\label{riem2}
	\nabla s \cdot\ba R\equiv 0,\qquad \nabla s \cdot\fa R\equiv 1.
\eeq
(See \eq{riem_coords_p_syst} for the case of the $p$-system \eq{tau}-\eq{p_syst}.) 
Finally, the map $(r,s)\mapsto U(r,s)$ is assumed to be a diffeomorphism.

\subsection{Exact and Glimm-type TVD fields}\label{tvd_fields}
For clarification we introduce two types of TVD fields, exact and Glimm-type. 
However, both the representation result (Theorem \ref{repr}) and the 
non-existence result for the $p$-system (Theorem \ref{non_exist}) will be formulated
for Glimm-type TVD fields only.

First, $U(t,X)$ is an {\em entropic weak solution} to \eq{claw} provided it is a weak (distributional)
solution of \eq{claw}, and satisfies 
\[\eta(U)_t+q(U)_X\leq 0 \qquad\text{in $\mathcal D'$,}\]
whenever $\eta,\, q:\mathcal U\to \RR$ are such that $\nabla_U q=\nabla_U\eta D_Uf$ 
and $\eta$ is convex. (We refer to \cites{daf} for details.) Given a scalar field 
$\vp:\mathcal U\to \RR$ and a weak solution $U(t,X)$ of \eq{claw} such that 
$U(t)\equiv U(t,\cdot)\in\text{BV}(\RR)$, the variation of the 
function $X\mapsto\vp(U(t,X))$ is denoted $\var \vp(U(t))$, or just $\var\vp$.
\begin{definition}\label{exact_tvd}
	A scalar field $\vp:\mathcal U\to \RR$ is an {\em exact TVD field} for the system \eq{claw}
	provided the following holds: whenever $U(t,X)$ is an entropic weak solution of \eq{claw}
	with $U(t)\in\emph{BV}(\RR)$ for $t>0$, 
	the map $t\mapsto\var \vp(U(t))$ is non-increasing on $\RR_+$.
\end{definition}
Next, we recall how pairwise wave interactions are resolved in the Glimm scheme \cite{gl}. 
Consider two approaching elementary waves (shocks, contacts, or centered rarefactions) 
with left, middle, and right states $U_l$, $U_m$, and $U_r$, respectively. The interaction 
of these waves is resolved by solving the {\em interaction Riemann problem} $(U_l,U_r)$,
whose solution we denote by $U(t,x)$ with $t>t^*$, $t^*$ being the time of interaction.
\begin{definition}\label{glimm_tvd}
	A scalar field $\vp:\mathcal U\to \RR$ is a {\em Glimm-type TVD field} for the system \eq{claw}
	provided the following holds: whenever $U_l$, $U_m$, and $U_r$ are left, middle, and right 
	states, respectively, in a pairwise interaction as described above, the spatial variation of  
	$\vp(U(t,X))$ for $t>t^*$ is less than or equal 
	to its spatial variation across the incoming waves $(U_l,U_m)$ and $(U_m,U_r)$ combined.
\end{definition}
\noindent We next derive a necessary condition on the functional form of any Glimm-type TVD field 
for \eq{claw}.

\subsection{Representation of Glimm-type TVD fields}
We shall obtain the general form of a Glimm-type TVD field in terms of Riemann coordinates 
by considering weak, head-on interactions. To analyze these we use the Glimm interaction 
estimate (\cite{daf}, Section 9.9) together with the freedom of choosing commuting versions 
of the eigenvector fields $\ba R$ and $\fa R$.

Changing notation slightly we let the extreme left state in the interactions be
$\bar U$. In the following computation all quantities are evaluated at $\bar U$
unless indicated otherwise,. 
The far right state is denoted $U$, while the incoming middle state and the outgoing 
middle state are denoted $U_0$ and $\hat U$, respectively. 
Consider the head-on interaction on the left in Figure 1. 
\begin{figure}\label{interactns2}
	\centering
	\includegraphics[width=9cm,height=2.5cm]{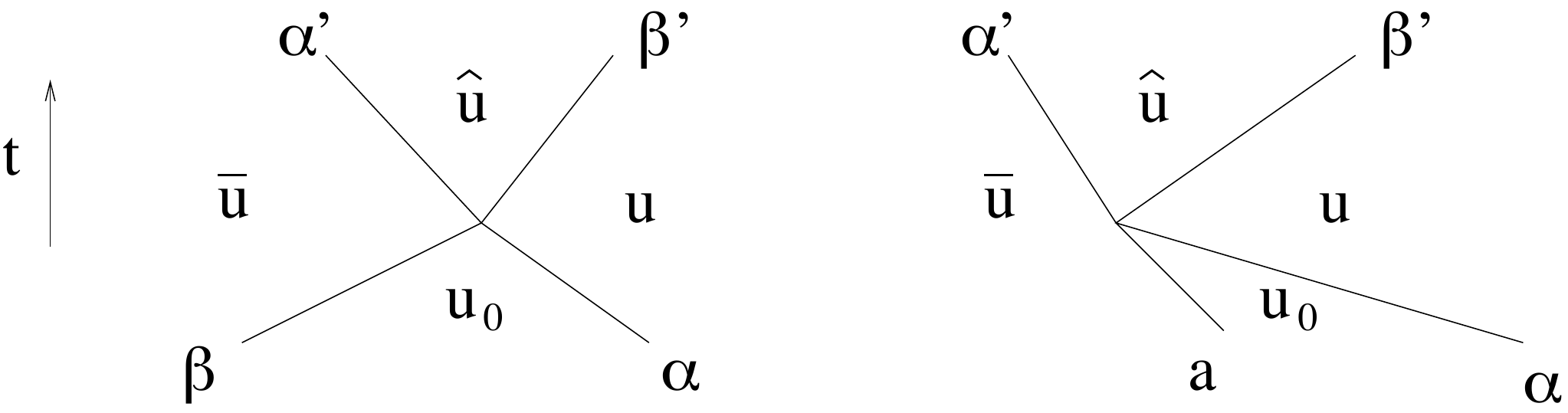}
	\caption{Pairwise interactions: head-on (I) and overtaking (II). (Schematic)}
\end{figure} 
Let $\alpha$, $\alpha'$, $\beta$, and $\beta'$ denote the parameter values corresponding to incoming 
and outgoing waves in the slow and fast families, respectively. The total variation of $\vp(U)$ 
before and after interaction are, respectively,
\beq\label{var_phi_before}
	\var \vp^-\equiv |\vp(U)-\vp(U_0)|+|\vp(U_0)-\vp(\bar U)|,
\eeq
and
\beq\label{var_phi_after}
	\var \vp^+\equiv |\vp(U)-\vp(\hat U)|+|\vp(\hat U)-\vp(\bar U)|.
\eeq
To evaluate these for weak interactions we use Taylor expansion and
Glimm's interaction estimate \cites{gl, daf}.
Consider the variation of $\vp$ before interaction. The middle state $U_0$ is given by
\beq\label{U_0}
	U_0=\fa W(\beta;\bar U)=\bar U +\beta \fa R+\frac{1}{2}\beta^2(D\fa R)\fa R+O^3(\alpha),
\eeq
while the right state $U$ is given by
\begin{align}
	U&=\ba W(\alpha;U_0)=U_0 +\alpha \ba R\big|_{U_0}
	+\frac{1}{2}\alpha^2(D\ba R)\ba R\big|_{U_0}+O(\alpha^3)\nn\\
	&=U_0+\alpha\ba R+\alpha\beta(D\ba R)\fa R
	+\frac{1}{2}\alpha^2(D\ba R)\ba R+O^3(\alpha,\beta),
	\label{U}
\end{align}
where we have used \eq{U_0}. We use  $O^3(\alpha,\beta)$ to indicate third order terms such as
$\alpha^3$, $\alpha^2\beta$ etc., and recall that all quantities are evaluated at 
$\bar U$ unless otherwise indicated.

Taylor expanding $\vp$ about $U_0$, and using \eq{U_0} and \eq{U}, we obtain
\begin{align}
	& \vp(U)-\vp(U_0)\nn\\
	&=\alpha\nabla\vp\ba R+\left[\frac{\alpha^2}{2}\nabla\vp(D\ba R)\ba R
	+\frac{\alpha^2}{2}\ba R^T\!\!(D^2\vp)\ba R+\alpha\beta\nabla \vp(D\ba R)\fa R 
	+\alpha\beta \fa R^T\!\!(D^2\vp)\ba R\right] + O^3(\alpha,\beta).\label{before11}
\end{align}
(Note: gradients are row vectors, while $\ba R$ and $\fa R$ are column vectors. A `$T$' superscript
denotes transpose, and $D^2\vp$ denotes the Hessian of $\vp$. Here and below $\nabla$ and $D^2$ 
are with respect to $U$ unless stated otherwise.) Similarly we have
\beq\label{before12}
	\vp(U_0)-\vp(\bar U)=\beta\nabla\vp\fa R+\frac{\beta^2}{2}\left[\nabla\vp(D\fa R)\fa R
	+\fa R^T\!\!(D^2\vp)\fa R\right] + O^3(\alpha,\beta).
\eeq
Thus
\begin{align}
	\var \vp^- =&\quad \left|\alpha\nabla\vp\ba R+\frac{\alpha^2}{2}\left[\nabla\vp(D\ba R)\ba R
	+\ba R^T\!\!(D^2\vp)\ba R\right]+\alpha\beta\left[\nabla \vp(D\ba R)\fa R 
	+\fa R^T\!\!(D^2\vp)\ba R\right] + O^3(\alpha,\beta)\right|\nn\\
	& +\left|\beta\nabla\vp\fa R+\frac{\beta^2}{2}\left[\nabla\vp(D\fa R)\fa R
	+\fa R^T\!\!(D^2\vp)\fa R\right] + O^3(\alpha,\beta)\right| \label{TV_before}
\end{align}
Next, for the states after interaction we first use similar Taylor expansions with respect to the 
outgoing strengths $\alpha'$ and $\beta'$, to obtain a corresponding expression for $\var \vp^+$. 
We shall then recall the Glimm interaction estimate which provides 
$\alpha'$ and $\beta'$ in terms of $\alpha$ and $\beta$ to leading orders. 
Combining these yields an expression for $\var  \vp^+$ in terms of $\alpha$ and $\beta$ 
which we can compare to $\var  \vp^-$ as given by \eq{TV_before}.
As above we have 
\[\hat U=\ba W(\alpha';\bar U)=\bar U +\alpha' \ba R+\frac{1}{2}{\alpha'}^2(D\ba R)\ba R+O^3(\alpha'),\]
and
\[ U=\hat U +\beta' \fa R+\alpha'\beta'(D\fa R)\ba R+\frac{1}{2}{\beta'}^2(D\fa R)\fa R+O^3(\alpha',\beta').\]
It follows that
\begin{align*}
	& \vp(U)-\vp(\hat U)\nn\\
	& =\beta'\nabla\vp\fa R+\alpha'\beta'\left[\nabla\vp(D\fa R)\ba R + \ba R^T\!\!(D^2\vp)\fa R\right]
	+\frac{{\beta'}^2}{2}\left[\nabla\vp (D\fa R)\fa R+\fa R^T\!\!(D^2\vp)\fa R\right]+ O^3(\alpha',\beta'),
\end{align*}
and 
\[\vp(\hat U)-\vp(\bar U) = \alpha'\nabla\vp\ba R+\frac{{\alpha'}^2}{2}
\left[\nabla\vp (D\ba R)\ba R+\ba R^T\!\!(D^2\vp)\ba R\right]+ O^3(\alpha',\beta').\]
Hence,
\begin{align}
	\var \vp^+ =&\quad \left|\beta'\nabla\vp\fa R+\alpha'\beta'\left[\nabla\vp(D\fa R)\ba R + \ba R^T\!\!(D^2\vp)\fa R\right]
	+\frac{{\beta'}^2}{2}\left[\nabla\vp (D\fa R)\fa R+\fa R^T\!\!(D^2\vp)\fa R\right]+ O^3(\alpha',\beta')\right|\nn\\
	&+\left|\alpha'\nabla\vp\ba R+\frac{{\alpha'}^2}{2}
	\left[\nabla\vp (D\ba R)\ba R+\ba R^T\!\!(D^2\vp)\ba R\right]+ O^3(\alpha',\beta')\right|. \label{TV_after}
\end{align}
We now invoke Glimm's interaction estimate (\cite{daf}, Section 9.9) according to which
\[\alpha'=\alpha+\alpha\beta\ba L\big[\fa R,\ba R\big]+O^3(\alpha,\beta)\qquad\text{and}\qquad
\beta'=\beta+\alpha\beta\fa L\big[\fa R,\ba R\big]+O^3(\alpha,\beta),\]
where $[X,Y]\equiv (DY)X-(DX)Y$ denotes the commutator. We then scale $\fa R,\ba R$ such that their
commutator vanishes. (This is always possible for planar vector fields.) 
With these versions of $\ba R$ and $\fa R$ fixed we obtain that 
\[\alpha'=\alpha+O^3(\alpha,\beta)\qquad\text{and}\qquad\beta'=\beta+O^3(\alpha,\beta),\]
and substitution into \eq{TV_after} yields
\begin{align}
	\Delta \var  \vp &\equiv \var  \vp^+ -\var  \vp^- \nn\\
	=&\quad \left|\beta\nabla\vp\fa R+\alpha\beta\left[\nabla\vp(D\fa R)\ba R + \ba R^T\!\!(D^2\vp)\fa R\right]
	+\frac{{\beta}^2}{2}\left[\nabla\vp (D\fa R)\fa R+\fa R^T\!\!(D^2\vp)\fa R\right]+ O^3(\alpha,\beta)\right|\nn\\
	&+\left|\alpha\nabla\vp\ba R+\frac{{\alpha}^2}{2}
	\left[\nabla\vp (D\ba R)\ba R+\ba R^T\!\!(D^2\vp)\ba R\right]+ O^3(\alpha,\beta)\right|\nn\\
	& -\left|\alpha\nabla\vp\ba R+\frac{\alpha^2}{2}\left[\nabla\vp(D\ba R)\ba R
	+\ba R^T\!\!(D^2\vp)\ba R\right]+\alpha\beta\left[\nabla \vp(D\ba R)\fa R 
	+\fa R^T\!\!(D^2\vp)\ba R\right] + O^3(\alpha,\beta)\right|\nn\\
	& -\left|\beta\nabla\vp\fa R+\frac{\beta^2}{2}\left[\nabla\vp(D\fa R)\fa R
	+\fa R^T\!\!(D^2\vp)\fa R\right] + O^3(\alpha,\beta)\right|.\label{dlt_tv}
\end{align}
We now use that $\big[\fa R,\ba R\big]\equiv 0$ and $\ba R^T\!\!(D^2\vp)\fa R\equiv \fa R^T\!\!(D^2\vp)\ba R$, 
and set
\begin{align}
	A&:=\nabla\vp\fa R\nn\\
	B&:=\nabla\vp(D\fa R)\ba R + \fa R^T\!\!(D^2\vp)\ba R\nn\\
	C&:= \nabla\vp (D\fa R)\fa R+\fa R^T\!\!(D^2\vp)\fa R\nn\\
	D&:=\nabla\vp\ba R\nn\\
	E&:=\nabla\vp (D\ba R)\ba R+\ba R^T\!\!(D^2\vp)\ba R,\nn
\end{align}
such that \eq{dlt_tv} reads
\begin{align}
	\Delta \var  \vp =&\quad \left|\beta A+\alpha\beta B+\frac{{\beta}^2}{2}C+ O^3(\alpha,\beta)\right| 
	+\left|\alpha D+\frac{{\alpha}^2}{2}E+ O^3(\alpha,\beta)\right|\nn\\
	&-\left|\alpha D+\frac{\alpha^2}{2}E+\alpha\beta B + O^3(\alpha,\beta)\right|
	-\left|\beta A+\frac{\beta^2}{2}C + O^3(\alpha,\beta)\right|\nn\\
	&=: |\Delta_1|+|\Delta_2|-|\Delta_3|-|\Delta_4|.\label{dlt_tv2}
\end{align}

Recall that $A$, $B$, $C$, $D$, $E$ are all evaluated at $\bar U$. We consider 
the situation when both $A=\nabla\vp \cdot\fa R$ and $D=\nabla\vp \cdot\ba R$ are non-vanishing 
on some open set $\mathcal U'\subset\mathcal U$.  
With $\bar U$ fixed in $\mathcal U'$ we then consider weak interactions. Specifically, we 
choose $|\alpha|,\, |\beta|\neq 0$ and so small that 
\[\sgn \Delta_1=\sgn (\beta A)=\sgn \Delta_4,\]
and
\[\sgn \Delta_2=\sgn (\alpha D)=\sgn \Delta_3.\]
As we assume $A$ and $D$ are non-vanishing there are four possibilities:
\begin{itemize}
	\item[(i)] $\beta A$, $\alpha D>0$
	\item[(ii)] $\beta A$, $\alpha D<0$
	\item[(iii)] $\beta A<0<\alpha D$
	\item[(iv)] $\alpha D<0<\beta A$.
\end{itemize}
For these we get, respectively,
\begin{itemize}
	\item[(i)'] $\Delta \var  \vp=\Delta_1+\Delta_2-\Delta_3-\Delta_4
	=O^3(\alpha,\beta)$
	\item[(ii)'] $\Delta \var \vp=-\Delta_1-\Delta_2+\Delta_3+\Delta_4
	=O^3(\alpha,\beta)$
	\item[(iii)'] $\Delta \var \vp=-\Delta_1+\Delta_2-\Delta_3+\Delta_4
	=-2\alpha\beta B+O^3(\alpha,\beta)$
	\item[(iv)'] $\Delta \var \vp=\Delta_1-\Delta_2+\Delta_3-\Delta_4
	=+2\alpha\beta B+O^3(\alpha,\beta)$.
\end{itemize}
Now, assume for contradiction that $B\neq 0$. 
Whatever the signs of $A$ and $D$ are, we consider a head-on interaction with 
left state $\bar U$, and with $\alpha$-value $\alpha_1\neq 0$ and $\beta$-value
$\beta_1\neq 0$ chosen so that Case (iii) holds. By reducing further (if necessary)
$|\alpha_1|$ and $|\beta_1|$, we obtain from (iii)' that
\beq\label{contr1}
	-2\alpha_1\beta_1 B<0.
\eeq
We then consider the head-on interaction with the same left state $\bar U$, but now 
with $\alpha$- and $\beta$-values $\alpha_2:=-\alpha_1$ and $\beta_2:=-\beta_1$.
This latter interaction then belongs to Case (iv), and (iv)' gives
\[2\alpha_2\beta_2 B=2\alpha_1\beta_1 B<0.\]
This contradicts \eq{contr1} and shows that we must have $B=0$.
Thus, in order that the total $X$-variation of $\vp(U(t,X))$ be non-increasing 
across all head-on Glimm interactions, it is necessary that
\[B=\nabla\vp(D\fa R)\ba R + \fa R^T\!\!(D^2\vp)\ba R=0,\]
which is equivalent to
\beq\label{nec_cond_2}
	\nabla(\nabla\vp\cdot\fa R)\cdot\ba R=0.
\eeq
\begin{remark}\label{no_overatk}
	Performing the same type of analysis for weak, overtaking interactions shows
	that $\Delta \var \vp=O^3(\alpha,\beta)$ in all such interactions.
	Thus, no similar constraint follows from non-increase of variation across weak,
	overtaking interactions.
\end{remark}
For the versions of $\ba R$ and $\fa R$ fixed above we let $r$ and $s$ be Riemann 
coordinates satisfying \eq{riem1} and \eq{riem2}.
By changing to these coordinates and considering $\vp$ as a function of $r$ and $s$,
the necessary condition \eq{nec_cond_2} takes the simple form
\beq\label{nec_cond_3}
	\del_{rs}^2\vp=0.
\eeq
This shows that $\vp$ must admit a representation of the form
\beq\label{r_s_repr}
	\vp(r,s)=\theta(s)-\psi(r)
\eeq
in any convex subset of $\mathcal V:=\{\,(r,s)\,|\, U(r,s)\in\mathcal U'\,\}$. 
(The minus sign is chosen for convenience in formulating later results.)
This conclusion was reached under the assumption that $\nabla_U\vp\cdot \fa R$ and $\nabla_U\vp \cdot\ba R$
are both non-vanishing in $\mathcal U'$. We note that if one of these vanishes identically 
in $\mathcal U'$, then $\vp$ is again of the form \eq{r_s_repr} in $\mathcal U'$ (with $\theta$ or $\psi$ vanishing).
We summarize our findings:
\begin{theorem}\label{repr}
	Given a $2\times 2$-system \eq{claw} satisfying the standard structural assumptions in Section \ref{syst}. 
	Let its eigen-structure be given by \eq{e_struct}, with $\ba R$ and $\fa R$ scaled to commute
	$(\big[\fa R,\ba R\big]\equiv 0)$ in $\mathcal U$, and let $r$, $s$ denote the corresponding Riemann 
	coordinates satisfying \eq{riem1}-\eq{riem2}.
	 
	Assume that $\vp\in C^2(\mathcal U)$ is a Glimm-type TVD field for \eq{claw}, and 
	consider $\vp$ as a function of $r$ and $s$. Define the sets
	\[\mathcal V:=\{\,(r,s)\in \RR^2\,|\, U(r,s)\in\mathcal U\,\}\]
	and
	\[\mathcal V':=\{\,(r,s)\in\mathcal V\,|\, \del_r\vp(r,s)\neq 0\text{ and } \del_s\vp(r,s)\neq 0\,\}.\]
	Then $\vp$ admits a representation of the form $\vp(r,s)=\theta(s)-\psi(r)$ on any open, convex subset of $\mathcal V'$. 
	Furthermore, if $\mathcal V$ is convex and $\mathcal V'$ is dense in $\mathcal V$, then there exist 
	functions $\theta(s)$, $\psi(r)$ such that $\vp$ has this representation throughout $\mathcal V$.
\end{theorem}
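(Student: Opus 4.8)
The plan is to assemble Theorem \ref{repr} from the interaction analysis just completed, together with two elementary additions: integrating the equation $\del^2_{rs}\vp=0$ on a convex domain, and a continuity/density argument. The computation above shows that whenever the base point $\bar U$ lies in an open set on which $A=\nabla_U\vp\cdot\fa R$ and $D=\nabla_U\vp\cdot\ba R$ are both nonvanishing, the Glimm-type TVD property forces $B=0$ at $\bar U$, and \eq{nec_cond_2}--\eq{nec_cond_3} identify this with $\del^2_{rs}\vp=0$. Under the Riemann-coordinate map $(r,s)\mapsto U(r,s)$ one has $\del_r U=\ba R$ and $\del_s U=\fa R$ (from \eq{riem1}--\eq{riem2}), hence $\del_r\vp=D$ and $\del_s\vp=A$; so the base points with $A,D\neq 0$ are exactly those corresponding to points of $\mathcal V'$. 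Since $\del_r\vp$ and $\del_s\vp$ are continuous, $\mathcal V'$ is open and each of its points has a neighbourhood on which both are nonvanishing, so the interaction analysis applies there. We conclude that $\del^2_{rs}\vp\equiv0$ on $\mathcal V'$.

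Next I would prove the representation on an open convex set $\Omega\subseteq\mathcal V'$ by solving $\del^2_{rs}\vp=0$. By convexity, every vertical slice $\{s:(r,s)\in\Omega\}$ and every horizontal slice $\{r:(r,s)\in\Omega\}$ is an interval, and the projections of $\Omega$ onto the $r$- and $s$-axes are intervals. Since $\del_s(\del_r\vp)=0$ on $\Omega$, the function $s\mapsto\del_r\vp(r,s)$ is constant on each vertical slice, and convexity forces $\del_r\vp$ to depend on $r$ alone on $\Omega$; pick $\psi\in C^1$ on the $r$-projection with $\psi'(r)=-\del_r\vp(r,s)$. Then $\del_r(\vp+\psi)\equiv0$ on $\Omega$, and constancy along horizontal slices shows that $\vp+\psi$ depends on $s$ alone, say $\vp(r,s)+\psi(r)=\theta(s)$. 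Hence $\vp(r,s)=\theta(s)-\psi(r)$ on $\Omega$, which is the first assertion.

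For the global statement, assume that $\mathcal V$ is convex and $\mathcal V'$ is dense in $\mathcal V$. Since $\vp\in C^2$, the function $\del^2_{rs}\vp$ is continuous on $\mathcal V$; it vanishes on the dense set $\mathcal V'$, and therefore vanishes identically on $\mathcal V$. Running the integration of the previous paragraph with $\Omega=\mathcal V$ --- which is legitimate because $\mathcal V$ is itself convex --- yields a single pair of functions $\psi$ (on the $r$-projection of $\mathcal V$) and $\theta$ (on the $s$-projection) with $\vp(r,s)=\theta(s)-\psi(r)$ throughout $\mathcal V$.

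I do not expect a genuine obstacle here: the substantive step --- extracting the constraint $B=0$ from weak head-on Glimm interactions by passing to commuting versions of $\ba R,\fa R$ and invoking the Glimm interaction estimate --- is already in place, and what remains is bookkeeping. Two points deserve a line of care: convexity of the domain is exactly what makes the two successive one-variable integrations globally unambiguous (all slices and both projections must be intervals, so that $\psi$ and $\theta$ are well defined), and openness of $\mathcal V'$ is what licenses applying the local interaction analysis at each of its points. The density hypothesis enters only to propagate $\del^2_{rs}\vp=0$ from $\mathcal V'$ to all of $\mathcal V$ by continuity; one could instead observe that on any open subset of $\mathcal V$ where $\del_r\vp$ or $\del_s\vp$ vanishes identically, $\vp$ is already separated with $\theta$ or $\psi$ constant, but the stated hypotheses make such a case distinction unnecessary.
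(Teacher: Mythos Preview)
Your proposal is correct and follows essentially the same approach as the paper: the interaction analysis (already done before the theorem) yields $\del^2_{rs}\vp=0$ on $\mathcal V'$, density plus continuity of $\del^2_{rs}\vp$ propagates this to all of $\mathcal V$, and convexity allows a single global integration into $\theta(s)-\psi(r)$. You supply more detail on the integration step (slices as intervals, defining $\psi$ then $\theta$) than the paper, which simply asserts that convexity gives a single representing pair, but the substance is identical.
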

\begin{proof}
	It only remains to argue for the last part. So assume $\mathcal V'$ is dense in $\mathcal V$ and 
	that the latter is convex. Given any point $(\bar r,\bar s)$ in the open set $\mathcal V'$, we choose a ball
	$B\subset\mathcal V'$ about $(\bar r,\bar s)$. According to the analysis above $\del^2_{rs}\vp$ vanishes identically on $B$,
	and in particular $\del^2_{rs}\vp(\bar r,\bar s)=0$. As $\del^2_{rs}\vp$ is continuous on $\mathcal V$ 
	we conclude that $\del^2_{rs}\vp\equiv 0$ throughout $\mathcal V$. Finally, by convexity of $\mathcal V$
	it follows that there is a single pair of functions $\theta$ and $\psi$ such that $\vp(r,s)=\theta(s)-\psi(r)$
	throughout $\mathcal V$.\footnote{Some restriction on the domain is required for the existence of {\em one} 
	pair $\theta,\, \psi$ such that \eq{r_s_repr} holds in all of $\Omega$. E.g., on $\Omega:=(-2,2)^2\setminus[-1,1]^2$ let 
	$\vp(r,s)=\eta(s)\chi_{(0,1)\times (-1,1)}(r,s)$, where $\eta\not\equiv0$ is smooth 
	and with support in $(-1,1)$. Then $\del_{rs}^2\vp\equiv 0$ on $\Omega$, but $\vp$ does not 
	have one representation of the form \eq{r_s_repr} which is valid throughout $\Omega$.}
\end{proof}
\begin{remark}
	Note that the decay of the Liu functional $L(t)$ in \eq{liu_fncl} shows that $\vp(r,s)=s-r$ is  
	a Glimm-type TVD field for any $2\times 2$-systems satisfying Bakhvalov's conditions.
\end{remark}

\section{Isentropic gas dynamics}
In the remainder of this paper we consider the specific case of isentropic gas dynamics
with a standard $\gamma$-law pressure function \eq{tau}-\eq{p_syst}.

\subsection{Eigen-structure and wave curves}
The characteristic speeds are 
\beq\label{char_speeds}
	\ba\lambda=-\sqrt{\gamma}\,\,\tau^{-\alpha-1}\qquad \text{and}\qquad 
	\fa\lambda=\sqrt{\gamma}\,\,\tau^{-\alpha-1},
\eeq
where 
\[\alpha:=\frac{\gamma-1}{2}.\]
As corresponding right eigenvectors of the Jacobian of the flux $(-u,p(\tau))^T$ we choose
\[\ba R={\textstyle\frac{1}{2}}\left[
\begin{array}{c}
	\frac{1}{\sqrt{\gamma}}\tau^{\alpha+1}\\
	1
\end{array}\right]
\qquad \text{and}\qquad 
\fa R={\textstyle\frac{1}{2}}\left[
\begin{array}{c}
	-\frac{1}{\sqrt{\gamma}}\tau^{\alpha+1}\\
	1
\end{array}\right].\]
These satisfy $\nabla_{\!(\tau,u)}\ba \lambda\cdot \ba R>0$, 
$\nabla_{\!(\tau,u)}\fa \lambda\cdot \fa R>0$, and $[\fa R,\ba R]\equiv 0$.
As Riemann invariants we use
\beq\label{riem_coords_p_syst}
	r:=u-\kappa\tau^{-\alpha}\qquad \text{and}\qquad s:=u+\kappa\tau^{-\alpha},
	\qquad \kappa:=\frac{\sqrt{\gamma}}{\alpha},
\eeq
such that
\[\nabla_{\!(\tau,u)}r \cdot \ba R\equiv 1,\qquad 
\nabla_{\!(\tau,u)}r \cdot \fa R\equiv 0,\qquad \nabla_{\!(\tau,u)}s \cdot \ba R\equiv 0,\qquad
\nabla_{\!(\tau,u)}s \cdot \fa R\equiv 1.\]
We parametrize the wave curves using $\xi$-ratios where
\beq\label{xi_ratio}
	\xi:=\rho^\alpha=\tau^{-\alpha};
\eeq
see Remark \ref{det}. The Riemann invariants are then
\beq\label{s_r_def}
	r=u-\kappa  \xi,\quad s=u+\kappa  \xi.
\eeq
We next give the parametrizations in the $(\xi,u)$-plane of the backward and forward 
wave curves of the ``first type.'' That is, given a base point $(\bar \xi,\bar u)$, we consider
the curves of points $(\xi,u)$ such that the Riemann problem with left state $(\bar \xi,\bar u)$
and right state $(\xi,u)$ yields a single backward or forward wave (rarefaction or entropic 
shock). Letting $b$ and $f$ denote the $\xi$-ratios $\xi_\text{right}/\xi_\text{left}$ across 
backward and forward waves, respectively, the parametrizations are given by:
\begin{align}
	&\text{Backward waves:}\qquad
	{\ba{V}}(b;\bar \xi,\bar u) = \left(\!\!\begin{array}{c}
		b\bar \xi \\
		\bar u -{\ba{\phi}}(b)\bar \xi\\
	\end{array} \!\!\right)
	\qquad\left\{\begin{array}{ll}
	\ba R:\quad 0<b<1 \\
	\ba S:\quad b>1
	\end{array}\right.\label{bkwd_wave}\\
	&\text{Forward waves:}\,\,\,\qquad
	{\fa{V}}(f;\bar \xi,\bar u) = \left(\!\!\begin{array}{c}
		f\bar \xi \\
		\bar u +{\fa{\phi}}(f)\bar \xi\\
	\end{array} \!\!\right)
	\qquad\left\{\begin{array}{ll}
	\fa R:\quad f>1 \\
	\fa S:\quad 0<f<1,
	\end{array}\right.\label{frwd_wave}
\end{align}
where the auxiliary functions $\ba\phi$ and $\fa\phi$ are given by
\[{\ba{\phi}}(x) = \left\{\begin{array}{ll}
	\kappa  (x-1) \quad & 0<x<1\\\\
	\sqrt{(1-x^{-\frac{1}{\alpha}})(x^{\frac{\gamma}{\alpha}}-1)}\quad & x>1
\end{array}\right\}\] 
\[{\fa{\phi}}(x) = \left\{\begin{array}{ll}
	-\sqrt{(1-x^{-\frac{1}{\alpha}})(x^{\frac{\gamma}{\alpha}}-1)} \quad & 0<x<1\\\\
	\kappa  (x-1) \quad  & x>1.
\end{array}\right\}\] 
We refer to the $\xi$-ratios $b$ and $f$ as wave strengths.
Note that the auxiliary functions $\ba\phi$ and $\fa\phi$ are both strictly increasing and satisfy
\beq\label{phi_reln}
	\fa \phi(x)=-x{\ba{\phi}}\big(\textstyle\frac{1}{x}\big)\,.
\eeq
A calculation shows that the map
$x\mapsto \sqrt{(1-x^{-\frac{1}{\alpha}})(x^{\frac{\gamma}{\alpha}}-1)}$
is convex up for $x>1$. It follows that $\ba\phi$ is convex up, while
$\fa\phi$ is convex down. Furthermore, it is immediate to verify that 
\beq\label{phi_prime_infty}
	\lim_{x\to+\infty}\ba\phi '(x)=+\infty\qquad\text{and}\qquad 
	\lim_{x\to0^+}\fa\phi(x)=-\infty.
\eeq
For later reference we also record the fact that 
\beq\label{phi_limit}
	x-\left(\frac{\ba\phi(x)+\kappa(x+1)}{\ba\phi'(x)+\kappa}\right)\to+\infty
	\qquad\text{as $x\to+\infty$.}
\eeq
\begin{figure}\label{ba_fa_phi}
	\centering
	\includegraphics[width=8cm,height=7cm]{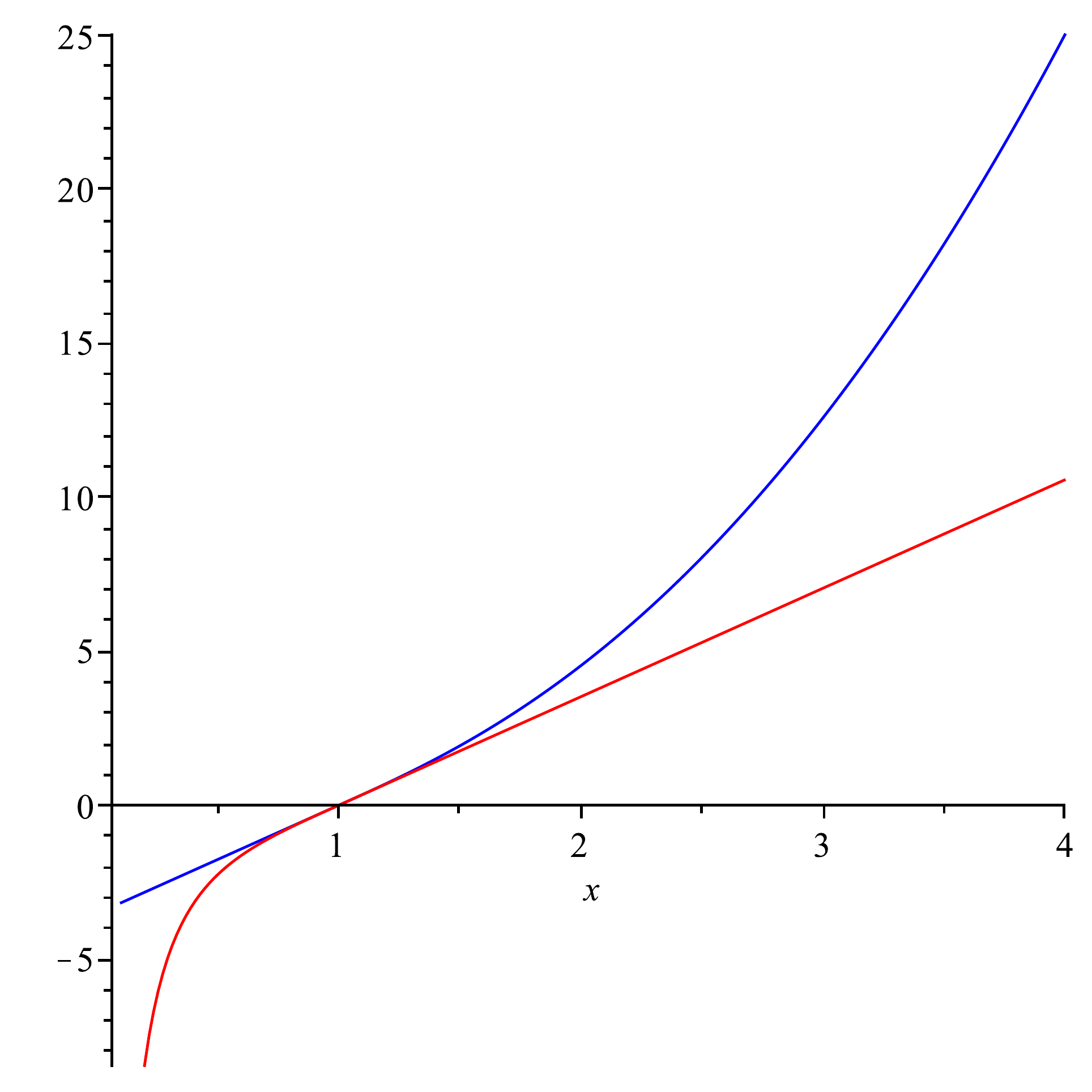}
	\caption{The auxiliary $\phi$ -functions (backward=solid, forward=dashed)}
\end{figure} 
The wave curves in the $(r,s)$-plane are illustrated in Figure 3. Note that the no-vacuum
region $\{\,\rho>0\,\}$ corresponds to the half-plane $\mathcal H:=\{\,s>r\,\}$ in $(r,s)$-coordinates.
%
\begin{figure}\label{riem_coords}
	\centering
	\includegraphics[width=8cm,height=6cm]{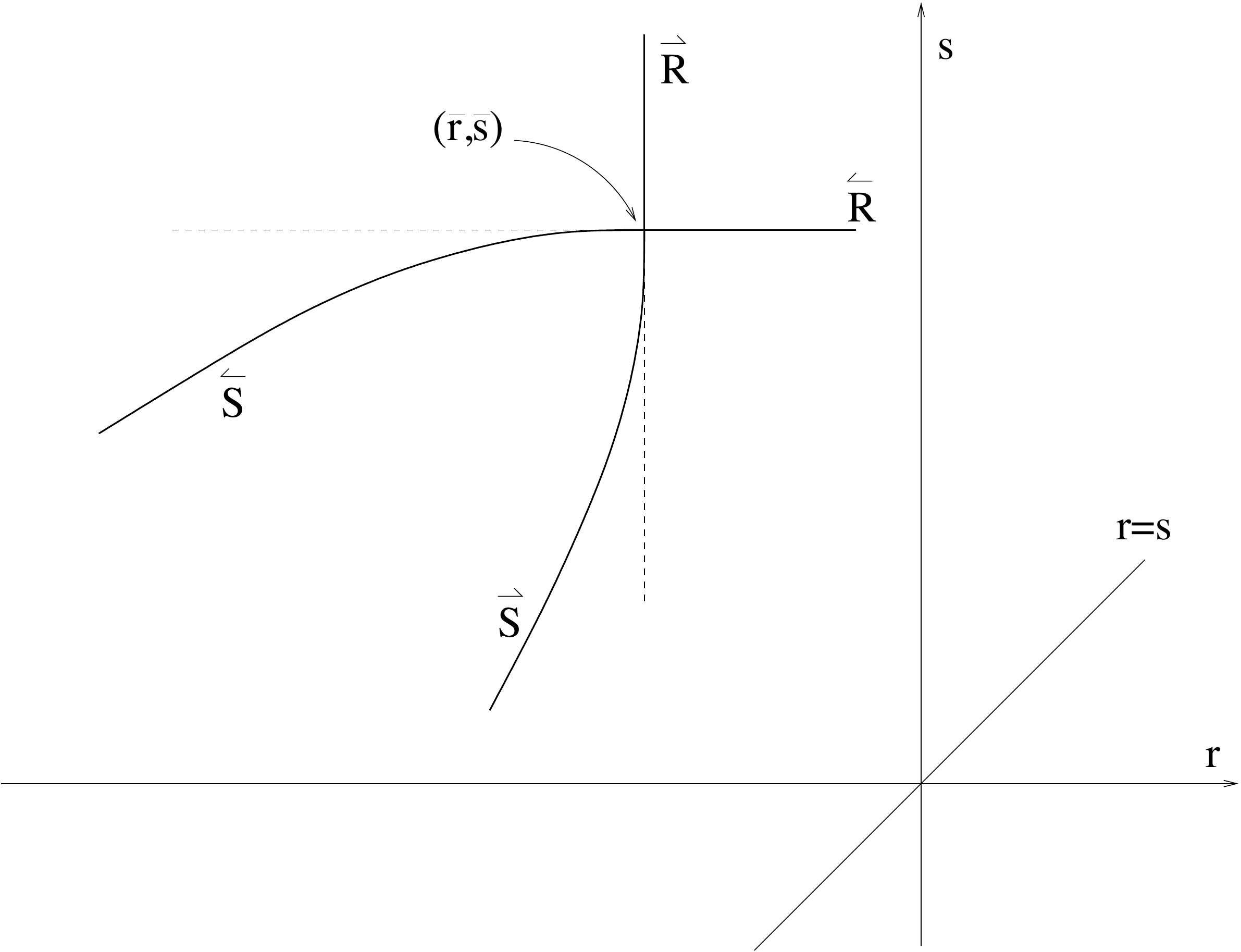}
	\caption{The wave curves in the Riemann coordinate plane}
\end{figure} 
%
%

\subsection{Riemann problems and vacuum criterion}
Consider the Riemann problem with left state $(\bar\xi,\bar u)$ and right state $(\xi,u)$,
and let $b$ and $f$ denote the $\xi$-ratios of the resulting backward and forward waves,
respectively. A short calculation shows that $b$ is given as the root of
\beq\label{b_eqn}
	\bar\xi\ba\phi(b)+\xi\ba\phi\left(\textstyle\frac{b\bar\xi}{\xi}\right)=\bar u- u,
\eeq
and $f=\frac{\xi}{b\bar\xi}$. As the left-hand side of \eq{b_eqn} is increasing with respect to $b$, it follows that
the Riemann problem $\big((\bar\xi,\bar u), \, (\xi,u)\big)$ has a unique solution without vacuum provided
\beq\label{no_vac}
	u-\bar u<\kappa  (\xi+\bar\xi)\qquad\qquad\text{(no vacuum)}.
\eeq

\subsection{Change in scalar functions of the Riemann invariants across shocks}
For later reference we record how scalar fields $h(r)$ and $k(s)$ change across a shock wave.
Let $(\bar u,\bar \xi)$ denote the state on the left of a backward shock wave with strength (i.e., 
$\xi$-ratio) $x$. The state on the right of the shock is then $(\tilde \xi,\tilde u)$, where
\[\tilde \xi =x\bar\xi,\qquad \tilde u=\bar u-\ba\phi(x)\bar \xi,\]
and the corresponding values of the Riemann invariants are
\[\bar r=\bar u-\kappa\bar\xi,\qquad \bar s=\bar u+\kappa\bar\xi
\qquad\text{(left state)},\]
and 
\[\tilde r=\bar u-\ba\phi(x)\bar \xi-\kappa x\bar\xi,\qquad
\tilde s=\bar u-\ba\phi(x)\bar \xi+\kappa x\bar\xi\qquad\text{(right state)}.\]
We then have
\beq\label{delta_h}
	\Delta_x h(r):= h(\tilde r) - h(\bar r)
	=-\bar\xi\int_1^x h'(\bar u-\ba\phi(\sigma)\bar \xi-\kappa \sigma\bar\xi)
	(\ba\phi'(\sigma)+\kappa)\, d\sigma,
\eeq
and
\beq\label{delta_k}
	\Delta_x k(s):= k(\tilde s) - k(\bar s)
	=-\bar\xi\int_1^x k'(\bar u-\ba\phi(\sigma)\bar \xi+\kappa \sigma\bar\xi)
	(\ba\phi'(\sigma)-\kappa)\, d\sigma.
\eeq
Similar identities hold for changes across forward waves.

\section{Pairwise interactions in isentropic flow}\label{interactions}
There are six essentially distinct types of pairwise wave interactions:
\beq\label{gr1}
	\left.
	\begin{array}{ll}
		\mbox{Ia}: & {\fa{R}}{\ba{R}}\\
		\mbox{Ib}: & {\fa{R}}{\ba{S}}\\
		\mbox{Ic}: & {\fa{S}}{\ba{S}}
	\end{array}	
	\qquad\right\}\qquad \mbox{head-on interactions}
\eeq
\beq\label{gr2}
	\left.
	\,\,\,\begin{array}{ll}
		\mbox{IIa}: & {\ba{S}}{\ba{S}}\\
		\mbox{IIb}: & {\ba{S}}{\ba{R}}\\
		\mbox{IIc}: & {\ba{R}}{\ba{S}}
	\end{array}	
	\qquad\right\}\qquad \mbox{overtaking interactions.}
\eeq
The head-on interaction 
\[\mbox{Ib'}: \fa{S}\ba{R},\]
and the overtaking interactions 
\beq\label{gr2'}
	\begin{array}{ll}
		\mbox{IIa'}: & {\fa{S}}{\fa{S}}\\
		\mbox{IIb'}: & {\fa{R}}{\fa{S}}\\
		\mbox{IIc'}: & {\fa{S}}{\fa{R}},
	\end{array}	
\eeq
are qualitatively the same as those in Ib and IIa-IIc, respectively. 
Finally, for cases IIb, IIc, IIb', and IIc', there are two possible outcomes depending 
on the relative strengths of the incoming waves.

The analysis of interaction Riemann problems for the $p$-system has been 
carried out, see \cite{chang_hsiao}.
In cases Ic, IIa, and IIa', where both incoming waves are shocks, this gives the exact,
weak entropy solution of the wave interaction. When one of the incoming waves is a rarefaction, 
the actual solution involves penetration of a rarefaction wave, to which the interaction Riemann 
problem provides an approximate solution. For completeness we include a brief description
of the results.

\begin{figure}\label{interactns1}
	\centering
	\includegraphics[width=8cm,height=2.7cm]{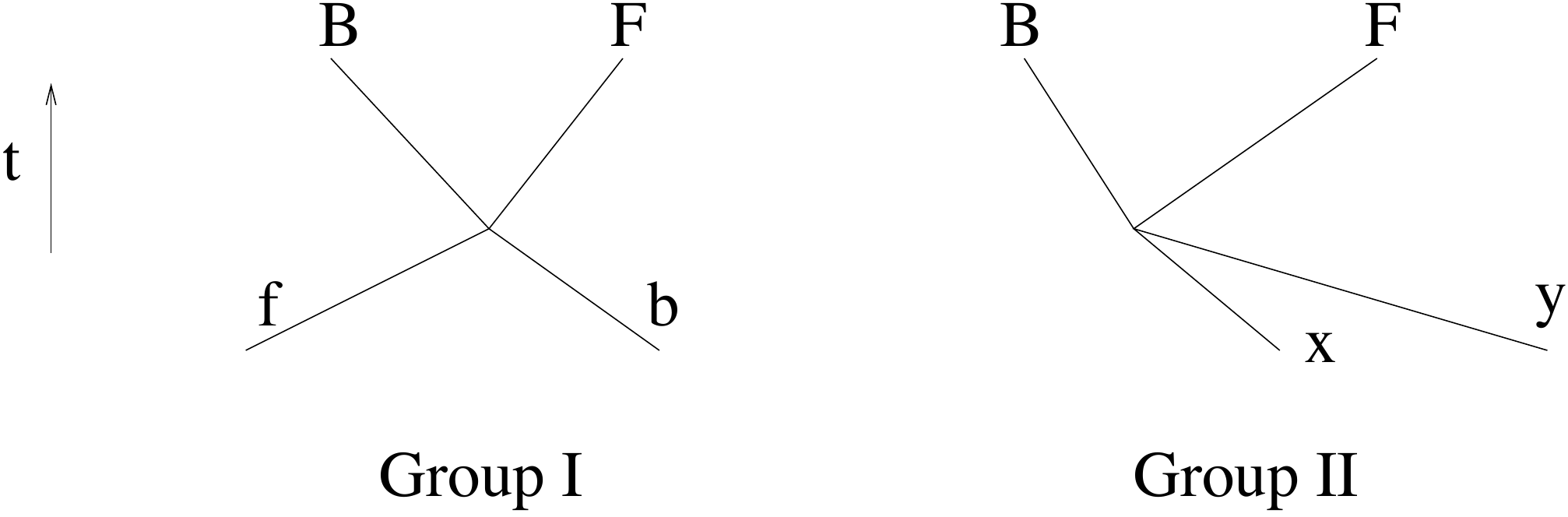}
	\caption{Interactions: head-on (I) and overtaking (II). (Schematic)}
\end{figure} 
%
%

\subsection{Group I: Head-on interactions} 
Let the state on the far left be $(\bar\xi,\bar u)$ and let the $\xi$-ratios of the 
incoming (outgoing) backward and forward waves be $b$ and $f$ ($B$ and $F$), 
respectively (see Figure 4, left diagram). Traversing the waves before and after interaction shows 
that $B$ and $F$ satisfy
\begin{align}
	BF &= bf\\
	\ba\phi(B)-B\fa\phi(F) &=f\ba\phi(b)-\fa\phi(f).
\end{align}
\begin{remark}\label{det}
	Parametrizing the wave curves in terms of $\xi$-ratios as in \eq{bkwd_wave} and 
	\eq{frwd_wave} implies that $B$ and $F$ are determined independently of 
	the left state $(\bar\xi,\bar u)$ in the interaction. The same is true for overtaking interactions. 
	This is advantageous when we later want to search for 
	interactions where new variation is created; see Section \ref{2nd_part}.
\end{remark}
\noindent Applying \eq{phi_reln} we get that the $\xi$-ratio $B=B(b,f)$ solves the equation
\beq\label{head_on_B}
	\mathcal{F}(B;b,f):=\ba\phi(B)+bf\ba\phi\left(\frac{B}{bf}\right)+\fa\phi(f)-f\ba\phi(b)=0.
\eeq
The condition for having no vacuum is that $B>0$, or
\[\ba\phi(b)+\kappa  b>\frac{1}{f}\left(\fa\phi(f)-\kappa  \right)
\qquad\text{(no vacuum in head-on)}.\]
A direct calculation shows that there is no vacuum in Ib (${\fa{R}}{\ba{S}}$) and Ic 
(${\fa{S}}{\ba{S}}$) interactions, while there is no vacuum in Ia (${\fa{R}}{\ba{R}}$) 
interaction if and only if the incoming ratios satisfy $b+\frac{1}{f}>1$.

By analyzing the interactions in the $(r,s)$-plane of Riemann invariants, we obtain that:
Ia interactions always yield $\ba R\fa R$; Ib interactions always yield $\ba S\fa R$ (and
Ib' interactions always yield $\ba R\fa S$); Ic interactions always yield $\ba S\fa S$.

\subsection{Group II: Overtaking interactions}\label{overtaking_interactions}
Let the state on the far left be $(\bar\xi,\bar u)$ and let the $\xi$-ratios of the 
incoming  backward waves be $x$ (leftmost) and $y$ (rightmost).
As above let $B$ and $F$ denote the $\xi$-ratios of the outgoing backward and forward 
waves, respectively. Traversing the waves before and after interaction yields
\begin{align}
	BF &= xy\label{overtaking1}\\
	\ba\phi(B)-B\fa\phi(F) &=\ba\phi(x)+x\ba\phi(y).\label{overtaking2}
\end{align}
Using \eq{phi_reln} we get that the $\xi$-ratio $B$ solves the equation
\beq\label{overtaking_B}
	\mathcal{G}(B;x,y):=\ba\phi(B)+xy\ba\phi\left(\frac{B}{xy}\right)-\ba\phi(x)-x\ba\phi(y)=0.
\eeq
The condition for having no vacuum is that $B>0$, or
\[{\textstyle\kappa  }(1+xy)+\ba\phi(x)+x\ba\phi(y)>0
\qquad\text{(no vacuum in overtaking)}.\]
A direct calculation shows that there is never vacuum formation in any of the overtaking interactions.
Furthermore, by analyzing the interactions in the $(r,s)$-plane of Riemann invariants, we obtain that:
IIa ($\ba S\ba S$) interactions always yield $\ba S\fa R$; 
IIb ($\ba S\ba R$) interactions yield either $\ba S\fa S$ (when the incoming shock is strong relative 
to the incoming rarefaction), or $\ba R\fa S$ (when the incoming shock is weak relative to the 
incoming rarefaction); IIc ($\ba R\ba S$) interactions yield either $\ba S\fa S$ (when the incoming 
shock is strong relative to the incoming rarefaction), or $\ba R\fa S$ (when the incoming shock is 
weak relative to the incoming rarefaction). Similar statements apply to IIa', IIb', and IIc' interactions.

\medskip

\section{Non-existence of Glimm-type TVD fields for isentropic flow}
The representation of Glimm-type TVD fields in Theorem \ref{repr} 
was obtained by analyzing only head-on interactions. To show that no such field exists 
for the $p$-system we will consider specific overtaking interactions.
We work in the Riemann coordinates $r$, $s$, given in \eq{riem_coords_p_syst},
which vary over
\[\mathcal H=\{\,(r,s)\,|\, s>r \,\}.\]
We restrict attention to non-degenerate fields:
\begin{definition}
	A scalar field $\vp:\mathcal H\to \RR$ is called {\em non-degenerate} provided
	the set
	\beq\label{non_deg} 
		\mathcal H_\vp:=\{\,(r,s)\in\mathcal H\,\,|\,\, \del_r\vp(r,s)\neq 0\text{ or } 
		\del_s\vp(r,s)\neq 0\,\} \quad\text{is dense in $\mathcal H$.}
	\eeq
\end{definition}
\noindent Our main result is the following:
\begin{theorem}\label{non_exist}
	There is no non-degenerate $C^2$-smooth TVD field of Glimm type for 
	\eq{tau}-\eq{p_syst}, with $p>1$, which is defined on all of $\mathcal H$.
\end{theorem}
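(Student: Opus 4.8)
The plan is to argue by contradiction: suppose $\vp$ is a non-degenerate $C^2$ Glimm-type TVD field on $\mathcal H$. By Theorem \ref{repr} (applicable since $\ba R,\fa R$ commute and $\mathcal H$ is convex, and non-degeneracy makes $\mathcal V'$ dense), we may write $\vp(r,s)=\theta(s)-\psi(r)$ globally on $\mathcal H$, for some $C^2$ functions $\theta,\psi$. Non-triviality means $\theta'$ and $\psi'$ are not both identically zero; in fact, by density of $\mathcal H_\vp$, at least one of them is nonvanishing on a dense open set. The strategy is then to test the TVD inequality on a carefully chosen family of \emph{overtaking} interactions and extract a pointwise contradiction on $\theta'$ or $\psi'$.

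First I would fix a single overtaking interaction, say of type IIa ($\ba S\ba S$), which by Section \ref{interactions} always produces $\ba S\fa R$. Using the change-of-variable formulas \eq{delta_h}--\eq{delta_k} for how $h(r)$ and $k(s)$ jump across backward shocks (and the analogous formulas for forward waves), I would write $\var\vp$ before interaction as $|\Delta_x\psi| + |\Delta_y\psi|$-type expressions (since two $\ba S$ waves change $r$ but not $s$ — recall $\nabla s\cdot\ba R\equiv 0$), while $\var\vp$ after is a $\psi$-change across the outgoing $\ba S$ plus a $\theta$-change across the outgoing $\fa R$. Crucially, by Remark \ref{det} the outgoing $\xi$-ratios $B,F$ depend only on the incoming ratios $x,y$, not on the base state $(\bar\xi,\bar u)$; this lets me slide the base state around while keeping the combinatorial structure fixed. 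The TVD requirement $\var\vp^+\le\var\vp^-$ must then hold for every base state and every admissible $(x,y)$.

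The heart of the argument is to drive one of the waves to a degenerate/limiting regime — for instance sending the incoming rarefaction-free configuration toward the vacuum boundary $s=r$, or exploiting the blow-up facts \eq{phi_prime_infty} and the limit \eq{phi_limit} as a strength ratio $\to\infty$ (this is exactly why \eq{phi_limit} was recorded "for later reference"). In that limit the geometry of the wave curves in the $(r,s)$-plane becomes asymptotically explicit, so the integrals in \eq{delta_h}--\eq{delta_k} reduce to leading-order expressions in which the integrand is $\psi'$ or $\theta'$ evaluated near a prescribed point. Balancing the before/after inequality in this regime should force a relation of the form: $\psi'$ (resp. $\theta'$) must vanish on a set dense in $\mathcal H$, hence identically by continuity; and then by symmetry (looking at the mirror $\fa S\fa S$ overtaking interactions of type IIa', which yield $\ba R\fa S$) the other derivative must vanish too. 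That contradicts non-triviality.

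The main obstacle I anticipate is the middle step: making the limiting asymptotics of the overtaking-interaction geometry precise enough that the TVD inequality collapses to a clean sign condition on $\psi'$ or $\theta'$ alone, rather than a tangled inequality mixing both. One likely needs to choose the incoming pair $(x,y)$ so that exactly one of the four wave-increments (incoming $\times 2$, outgoing $\times 2$) dominates, isolating a single derivative; the cases IIb/IIc with their two possible outcomes ($\ba S\fa S$ vs.\ $\ba R\fa S$) give extra flexibility here, since near the outcome-transition one outgoing wave is arbitrarily weak. Managing the signs in the absolute values — i.e.\ tracking whether $\psi'$ and $\theta'$ are positive or negative along the relevant curve segments, which is where the monotonicity and convexity properties of $\ba\phi,\fa\phi$ in \eq{phi_reln}--\eq{phi_prime_infty} enter — will be the bookkeeping that makes or breaks the contradiction, but the geometry of Figure 3 (wave curves in the $(r,s)$-plane) should make the right choices visible.
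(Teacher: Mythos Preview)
Your proposal has a genuine error and a strategic gap.

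The error: you write that ``two $\ba S$ waves change $r$ but not $s$ --- recall $\nabla s\cdot\ba R\equiv 0$.'' That relation makes $s$ constant along \emph{rarefaction} curves (integral curves of $\ba R$), not along shock curves; for a genuinely nonlinear system which is not Temple class, backward shocks change both $r$ and $s$ (this is Lemma \ref{change_s_r}(i)). So the variation of $\vp=\theta(s)-\psi(r)$ across an incoming $\ba S$ involves both $\theta$ and $\psi$, and your plan to have $\var\vp^-$ reduce to ``$|\Delta_x\psi|+|\Delta_y\psi|$-type expressions'' collapses at the outset.

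The strategic gap: your intended endgame is to force $\psi'\equiv 0$ (then $\theta'\equiv 0$ by symmetry), contradicting non-degeneracy. But the paper's argument does not and cannot go this way: the natural candidate $\vp=s-r$ (the Liu functional) has $\psi'=\theta'=1$ everywhere, and your limiting $\ba S\ba S$/$\fa S\fa S$ analysis will not distinguish it from a genuine TVD field --- indeed, for $\gamma=1$ it \emph{is} one. What the paper actually does is first prove (via weak $\ba S\ba S$ and $\fa S\fa S$ interactions) that $\theta$ and $\psi$ must be strictly monotone with the \emph{same} monotonicity; then split into three cases: $\theta'(a)>\psi'(a)$ somewhere, $\psi'(a)>\theta'(a)$ somewhere, or $\theta=\psi+C$. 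The first two are killed by strong $\ba S\ba S$ (resp.\ $\fa S\fa S$) interactions placed near the relevant point; the third --- the Liu-functional case --- is the crux, and is ruled out by an $\ba R\ba S$ interaction whose outgoing $\fa S$ is made disproportionately strong using the limit \eq{phi_limit}. Your proposal senses that \eq{phi_limit} matters but does not isolate the $\theta=\psi+C$ case as the one it is needed for.

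A smaller point: Theorem \ref{repr} requires $\mathcal V'=\{\del_r\vp\neq 0\text{ and }\del_s\vp\neq 0\}$ dense, whereas non-degeneracy only gives density of the ``or'' set; the paper bridges this with a separate lemma (weak $\ba S\ba S$ interactions again), which you have skipped.
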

\noindent 
Towards the proof of Theorem \ref{non_exist} we shall first show that the non-degeneracy
condition \eq{non_deg}, together with the TVD property, implies {\em strict} non-degeneracy
(cf.\ Theorem \ref{repr}):
\begin{definition}
	A scalar field $\vp:\mathcal H\to \RR$ is called {\em strictly non-degenerate} provided
	the set
	\beq\label{non_deg_strict} 
		\{\,(r,s)\in\mathcal H\,\,|\,\, \del_r\vp(r,s)\neq 0\text{ and } 
		\del_s\vp(r,s)\neq 0\,\} \quad\text{is dense in $\mathcal H$.}
	\eeq
\end{definition}
\begin{lemma}\label{non_strictly} 
	Any $C^2$-smooth, non-degenerate Glimm-type TVD field
	for \eq{tau}-\eq{p_syst} is necessarily strictly non-degenerate.
\end{lemma}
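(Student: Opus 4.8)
The plan is to argue by contradiction. Suppose $\vp$ is a $C^2$-smooth, non-degenerate Glimm-type TVD field that is \emph{not} strictly non-degenerate. Then the open set on which both partials are nonzero fails to be dense, so there is an open ball $B\subset\mathcal H$ on which the set $\{\del_r\vp\neq 0\ \text{and}\ \del_s\vp\neq 0\}$ is empty; i.e.\ at every point of $B$ at least one of $\del_r\vp$, $\del_s\vp$ vanishes. Combined with non-degeneracy \eqref{non_deg}, the set $\mathcal H_\vp$ is still dense in $B$, so at least one of the two partials is somewhere nonzero in $B$; say $\del_s\vp(p_0)\neq 0$ for some $p_0\in B$. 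By continuity $\del_s\vp\neq 0$ on a smaller ball $B'\subset B$ about $p_0$, and then on $B'$ we must have $\del_r\vp\equiv 0$. Thus on $B'$ the field depends on $s$ alone: $\vp(r,s)=\theta(s)$ with $\theta'\neq 0$ on the corresponding $s$-interval.

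The next step is to use an overtaking interaction to derive a contradiction from $\vp=\theta(s)$, $\theta'\neq 0$, on an open set. The point is that across an overtaking interaction the $s$-coordinate does change, and one can find such an interaction, localized inside the region corresponding to $B'$, for which the $s$-variation of the outgoing configuration strictly exceeds the $s$-variation of the incoming configuration. Concretely, I would pick a backward-shock/backward-shock (IIa, $\ba S\ba S$) interaction: by Section \ref{interactions} this yields an outgoing $\ba S\fa R$ pair. Using the change formulas \eqref{delta_h}--\eqref{delta_k} (and their forward analogues) together with the wave-curve geometry in the $(r,s)$-plane, I would track the values of $s$ at the four states (extreme left, incoming middle, outgoing middle, extreme right) and show that $\underset{\text{out}}{\var}\,s > \underset{\text{in}}{\var}\,s$ for suitable small strengths; Remark \ref{det} guarantees the outgoing strengths depend only on the incoming ratios, so the interaction can be placed with base point anywhere, in particular so that all four states lie in the $B'$-region. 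Since $\theta$ is strictly monotone on the relevant $s$-interval, $\var\theta(s)=\lvert\theta'\rvert$-weighted $\var s$ inherits the strict increase, contradicting the Glimm-type TVD property of $\vp$. The symmetric case $\del_r\vp\neq 0$, $\del_s\vp\equiv 0$ on a ball is handled identically using a forward-forward overtaking interaction (IIa', $\fa S\fa S$), which yields an outgoing $\ba S\fa R$ pair and increases the $r$-variation.

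The main obstacle I anticipate is the second step: verifying that the chosen overtaking interaction genuinely \emph{increases} the variation of the single Riemann coordinate that survives on $B'$. This is not automatic — for an exact TVD field no single Riemann coordinate works as a stand-alone functional, which is precisely why Nishida's functional pairs $r$ across backward shocks with $s$ across forward shocks. So the computation must exhibit an interaction in which the ``wrong'' combination grows. I expect this to come down to a monotonicity/convexity property of the auxiliary functions $\ba\phi$, $\fa\phi$ — specifically the fact (recorded in the excerpt) that $\ba\phi$ is convex up with $\ba\phi'(x)\to\infty$ and that $\fa\phi$ is convex down with $\fa\phi(x)\to-\infty$ as $x\to 0^+$, together with \eqref{phi_reln} and possibly \eqref{phi_limit}. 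The estimate should be carried out to leading order in the (small) incoming strengths, much as in the head-on analysis preceding Theorem \ref{repr}, but now isolating the sign of the relevant second-order term in $\var s$ (resp.\ $\var r$). Once that sign computation is pinned down, the contradiction is immediate and the lemma follows.
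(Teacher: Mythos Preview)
Your overall strategy matches the paper's proof exactly: reduce to $\vp=\theta(s)$ with $\theta'\neq 0$ on a rectangle, then use a weak $\ba S\ba S$ interaction (outcome $\ba S\fa R$) localized there to force $\var\vp$ to increase. However, the ``main obstacle'' you anticipate is not an obstacle at all, and the machinery you propose (convexity of $\ba\phi,\fa\phi$, the limits \eqref{phi_prime_infty}--\eqref{phi_limit}, leading-order expansions) is unnecessary. The paper's argument is a one-line monotonicity observation using only Lemma~\ref{change_s_r}: across each incoming $\ba S$ we have $\Delta s<0$, so $x\mapsto s(t,x)$ is strictly monotone before interaction; after interaction the outgoing $\ba S$ gives $\Delta s<0$ while the outgoing $\fa R$ gives $\Delta s>0$, so $x\mapsto s(t,x)$ is non-monotone. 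Since $\theta'$ has fixed sign on the rectangle, $\var\theta(s)$ strictly increases. No computation of strengths or second-order terms is needed.

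One genuine slip: in the symmetric case you say $\fa S\fa S$ yields $\ba S\fa R$. It does not; by the $(u,X)\mapsto(-u,-X)$ symmetry the outcome is $\ba R\fa S$. This matters: with your stated (incorrect) outcome $\ba S\fa R$ one has $\Delta r<0$ across $\ba S$ and $\Delta r=0$ across $\fa R$, so $r$ would remain monotone after interaction and $\var\psi(r)$ would \emph{not} increase. With the correct outcome $\ba R\fa S$ one gets $\Delta r>0$ then $\Delta r<0$, and the same monotone/non-monotone argument goes through.
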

\begin{proof}
	Assume for contradiction that $\vp$ is non-degenerate but not strictly non-degenerate.
	By definition there then exists an open set $\mathcal A\subset\mathcal H$
	such that for any point $(r,s)\in \mathcal A$, either $ \del_r\vp(r,s)=0$ or $\del_s\vp(r,s)= 0.$ 
	As $\vp$ is non-degenerate, $\mathcal A$ must meet the open set $\mathcal H_\vp$.
	Thus, there exits $(r^*,s^*)\in \mathcal A\cap\mathcal H_\vp$, with 
	$\del_s\vp(r^*,s^*)\neq 0$, say. By smoothness of $\vp$ it follows that there is an open rectangle  
	$R=(r_1,r_2)\times (s_1,s_2)\subset\mathcal A\cap \mathcal H_\vp$ where $\del_s\vp(r,s)\neq 0$, 
	and at the same time $\del_r\vp(r,s)= 0$ (since $R\subset\mathcal A$).
	Thus, $\vp(r,s)=\theta(s)$ for $(r,s)\in (r_1,r_2)\times (s_1,s_2)$, and $\theta'(s)\neq 0$ 
	for $s\in(s_1,s_2)$.
	
	Now choose $(\bar r,\bar s)\in R$ and consider $\ba S\ba S$-interactions with left 
	state $(\bar r,\bar s)$. Let the incoming waves be so weak that all $(r,s)$-states in the solution
	belongs to $R$, and also such that $\sgn \theta'(s)=\sgn\theta'(\bar s)$ for all 
	$s$-values in the solution; this is possible by smoothness of $\vp$. The outgoing 
	wave-pattern is $\ba S\fa R$. By considering the wave-curves in the $(r,s)$-plane 
	it follows that $x\mapsto\theta(s(t,x))$ is strictly monotone at times $t$ before 
	the interaction, whereas it is non-monotone after interaction. 
	We conclude that $\vp$ cannot be a Glimm-type TVD field.
	If instead $\del_r\vp(r^*,s^*)\neq 0$, then a similar argument with weak $\fa S\fa S$-interactions
	yields the same conclusion.
\end{proof}	
As a consequence of Lemma \ref{non_strictly}, to establish Theorem \ref{non_exist}, we only 
need to prove the non-existence of strictly non-degenerate, Glimm-type TVD fields. To do so
we assume for contradiction that $\vp$ is a $C^2$-smooth, strictly non-degenerate 
Glimm-type TVD field. From Theorem \ref{repr} we 
know that $\vp$ admits one representation of the form
\beq\label{repr2}
	\vp(r,s)=\theta(s)-\psi(r)\qquad \text{on all of $\mathcal H$.}
\eeq
To reach a contradiction we proceed in two steps:
\begin{itemize}
	\item[] {\bf Step 1:} By considering the change in $\var\vp$ across $\ba S\ba S$- and 
	$\fa S\fa S$-interactions, we obtain that $\theta$ and $\psi$ must have the same, strict monotonicity.
	(See Proposition \ref{prop_1}.)
	\item[]
	\item[] {\bf Step 2:} Then, assuming without loss of generality that $\theta$ and $\psi$ are both strictly 
	increasing, there are three possibilities: $\exists a\in\RR$ where $\theta'(a)>\psi'(a)$, vice versa, 
	or $\theta$ and $\psi$ differ by a constant. For each case we analyze carefully chosen 
	interactions and obtain that $\var\vp$ increases across these, contradicting the assumed TVD property.
\end{itemize}
\subsection{Step 1: Monotonicity of $\theta$ and $\psi$}
In what follows we use the following notation:
\begin{itemize}
	\item $\Delta \var  \vp$ denotes the change in spatial variation of $\vp$ 
	across a pairwise wave interaction; see \eq{var_phi_before}, \eq{var_phi_after}, and \eq{dlt_tv}.
	\item  For any function $h\equiv h(s,r)$ and any $\fa S$, $\ba S$, $\fa R$, or $\ba R$ wave with 
	associated $\xi$-ratio $q$ (see \eq{bkwd_wave}-\eq{frwd_wave}), we set 
	$\Delta_q{h}:=h_{right}-h_{left}$, where $h_{right}$ and $h_{left}$ denote the values of $h$ on the right and on the left  
	of the wave, respectively. 
\end{itemize}
\begin{lemma}\label{change_s_r} We have:
\begin{itemize}
	\item[(i)] $\Delta_q{s}<0$ and $\Delta_q{r}<0$ for $\fa S$-waves and $\ba S$-waves;
	\item[(ii)] $\Delta_q{s}>0$ and $\Delta_q{r}=0$ for $\fa R$-waves;
	\item[(iii)] $\Delta_q{r}>0$ and $\Delta_q{s}=0$ for $\ba R$-waves;
\end{itemize}
\end{lemma}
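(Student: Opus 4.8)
The plan is to verify the three sign statements by direct computation from the explicit wave-curve parametrizations \eq{bkwd_wave}--\eq{frwd_wave} together with the definitions \eq{s_r_def} of the Riemann invariants in terms of $(\xi,u)$. For a backward wave with left state $(\bar\xi,\bar u)$ and $\xi$-ratio $q$, the right state is $(q\bar\xi,\,\bar u-\ba\phi(q)\bar\xi)$, so
\[
\Delta_q r = -\ba\phi(q)\bar\xi-\kappa(q-1)\bar\xi=-\bar\xi\big(\ba\phi(q)+\kappa(q-1)\big),\qquad
\Delta_q s = -\ba\phi(q)\bar\xi+\kappa(q-1)\bar\xi=-\bar\xi\big(\ba\phi(q)-\kappa(q-1)\big).
\]
(These are just the $h(r)=r$, $k(s)=s$ instances of \eq{delta_h}--\eq{delta_k}, evaluated using $\ba\phi'(\sigma)+\kappa$ and $\ba\phi'(\sigma)-\kappa$.) For a $\ba R$-wave we have $0<q<1$ and $\ba\phi(q)=\kappa(q-1)$ by the first branch of the definition of $\ba\phi$; hence $\Delta_q r=-2\kappa\bar\xi(q-1)>0$ and $\Delta_q s=0$, giving (iii). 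Symmetrically, for an $\fa R$-wave one uses the corresponding forward-wave formulas and the branch $\fa\phi(q)=\kappa(q-1)$ valid for $q>1$ to get $\Delta_q s>0$, $\Delta_q r=0$, which is (ii).

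For (i), the shock cases, take first a $\ba S$-wave, so $q>1$ and $\ba\phi(q)=\sqrt{(1-q^{-1/\alpha})(q^{\gamma/\alpha}-1)}>0$. Then $\Delta_q r=-\bar\xi(\ba\phi(q)+\kappa(q-1))<0$ is immediate since both summands are positive. The remaining point, $\Delta_q s<0$, amounts to showing $\ba\phi(q)>\kappa(q-1)$ for all $q>1$, i.e.\ that the backward shock branch lies strictly above the backward rarefaction branch's linear extension. I would prove this by noting that at $q=1$ both sides vanish and both have the same one-sided derivative $\kappa$ (continuity of the wave curve at the base point), and then comparing second derivatives: $\ba\phi$ is strictly convex up on $q>1$ (a fact already recorded in the excerpt just below \eq{phi_reln}), while $\kappa(q-1)$ is linear, so the difference $\ba\phi(q)-\kappa(q-1)$ is strictly convex with a zero and zero derivative at $q=1$, hence strictly positive for $q>1$. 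The $\fa S$-wave case follows the same way, or more quickly from the symmetry relation $\fa\phi(x)=-x\ba\phi(1/x)$ in \eq{phi_reln}: an $\fa S$-wave has $0<q<1$, its right state is $(q\bar\xi,\bar u+\fa\phi(q)\bar\xi)$, so $\Delta_q r=\bar\xi(\fa\phi(q)-\kappa(q-1))$ and $\Delta_q s=\bar\xi(\fa\phi(q)+\kappa(q-1))$; substituting $\fa\phi(q)=-q\ba\phi(1/q)$ and setting $x=1/q>1$ reduces both sign claims to the inequality $\ba\phi(x)>\kappa(x-1)$ for $x>1$ just established (together with the trivial positivity of $\ba\phi$).

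The only genuine content beyond bookkeeping is the inequality $\ba\phi(x)>\kappa(x-1)$ on $(x>1)$, and the mild obstacle there is confirming the matching first-order data at $x=1$ — namely $\ba\phi(1)=0$ and $\ba\phi'(1^+)=\kappa$ — which is exactly the requirement that the shock and rarefaction curves through a point agree to first order (second-order contact, in fact, for genuinely nonlinear fields); this is standard and can be checked by expanding $\sqrt{(1-x^{-1/\alpha})(x^{\gamma/\alpha}-1)}$ near $x=1$. Everything else is a one-line substitution, and the convexity facts needed are already asserted in the text preceding this lemma.
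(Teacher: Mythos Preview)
Your argument is correct. The computations are sound, and the only nontrivial step—showing $\ba\phi(x)>\kappa(x-1)$ for $x>1$ via matching first-order data at $x=1$ together with the strict convexity of $\ba\phi$ on $(1,\infty)$—is handled cleanly and uses exactly the facts already recorded in the paper.

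The paper, however, takes a much shorter route: its entire proof is the sentence ``Immediate from the form of the wave curves in the Riemann coordinate plane (Figure~3).'' In $(r,s)$-coordinates the rarefaction curves are horizontal ($\ba R$: $s$ constant, $r$ increasing) or vertical ($\fa R$: $r$ constant, $s$ increasing), and the shock curves head strictly into the third quadrant relative to the base point; the lemma is then read off the picture. Your approach supplies the analytic verification behind that picture—in particular, the inequality $\ba\phi(x)>\kappa(x-1)$ is precisely what encodes that the $\ba S$-curve lies below the horizontal line $s=\bar s$ in Figure~3. So your proof is more self-contained and does not rely on the reader accepting the figure, at the cost of being longer; the paper's proof is terser but presumes the reader is comfortable with the standard geometry of Lax shock curves in Riemann-invariant coordinates.
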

\begin{proof}
	Immediate from the form of the wave curves in the Riemann coordinate plane (Figure 3).
\end{proof}
The following result narrows down the possible structure of strictly non-degenerate TVD fields in terms 
of the monotonicity of its two parts. 
\begin{proposition}\label{prop_1}
	Let $\vp$ be a strictly non-degenerate $C^2$-smooth TVD field of Glimm type 
	for \eq{tau}-\eq{p_syst}, and thus admitting a representation of the form 
	$\vp(r,s)=\theta(s)-\psi(r)$ on all of $\mathcal H$. 
	Then $\theta(s)$ and $\psi(r)$ are either both strictly increasing functions or both are 
	strictly decreasing functions. 
\end{proposition}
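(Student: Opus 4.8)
The plan is to derive a contradiction from assuming one of the two functions is increasing while the other is decreasing, by exhibiting a weak $\ba S\ba S$- or $\fa S\fa S$-interaction across which $\var\vp$ strictly increases. I would first fix notation using the records \eq{delta_h}-\eq{delta_k} for how $h(r)=\psi(r)$ and $k(s)=\theta(s)$ change across backward shocks (and the analogous identities across forward shocks), together with Lemma \ref{change_s_r}, which tells us that shocks decrease both $r$ and $s$. Since $\vp=\theta(s)-\psi(r)$, across any backward shock of strength $x>1$ with left state $(\bar\xi,\bar u)$ we have $\Delta_x\vp=\Delta_x\theta(s)-\Delta_x\psi(r)$, with both increments given by the integral formulas; the key structural point is that the sign of $\Delta_x\psi(r)$ is controlled (for weak shocks) by $-\psi'(\bar r)\cdot(\text{positive})$ since $\ba\phi'+\kappa>0$ near $x=1$, while $\Delta_x\theta(s)$ is controlled by $-\theta'(\bar s)\cdot(\ba\phi'-\kappa)$, and $\ba\phi'(1)=\kappa$ forces this second increment to be \emph{higher order} in $(x-1)$. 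Thus to leading order a weak backward shock changes $\psi(r)$ but essentially not $\theta(s)$; symmetrically, a weak forward shock changes $\theta(s)$ but essentially not $\psi(r)$.

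With this in hand I would run the $\ba S\ba S$-interaction argument. Pick a base point $(\bar r,\bar s)$ where $\psi'(\bar r)\neq 0$ and $\theta'(\bar s)\neq 0$ (possible by strict non-degeneracy), and take two incoming weak backward shocks, strengths $x,y>1$, so small that all intermediate states stay in a neighborhood on which $\psi'$ and $\theta'$ keep constant sign. The outgoing pattern is $\ba S\fa R$ (from the interaction analysis in Section \ref{interactions}). Using Lemma \ref{change_s_r}, along the incoming configuration $r$ is strictly monotone (it decreases across both shocks) and $s$ is strictly monotone (likewise decreasing), so $x\mapsto\psi(r(t,x))$ and $x\mapsto\theta(s(t,x))$ are each monotone before interaction; hence $\var\vp^-$ is, to leading order, $|\Delta\psi|+|\Delta\theta|$ summed coherently — in particular there is no cancellation and $\var\vp^-$ equals the sum of the magnitudes of the $\psi$-increments (the $\theta$-part being higher order). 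After interaction we have a backward shock followed by a forward rarefaction: across the outgoing $\fa R$, $r$ is unchanged but $s$ \emph{increases} (Lemma \ref{change_s_r}(ii)), so $\theta(s)$ moves back in the opposite direction. The point is that if $\theta$ and $\psi$ have \emph{opposite} monotonicity, the contributions of the two wave families to $\var\vp$ fail to add coherently in the way forced before interaction, and a careful leading-order bookkeeping — tracking the signed increments of $\psi(r)$ across the two incoming shocks versus across the single outgoing shock, and the increment of $\theta(s)$ across the outgoing $\fa R$ — yields $\var\vp^+ > \var\vp^-$. Symmetrically, $\fa S\fa S$-interactions (outgoing $\ba R\fa S$) give the mirror-image computation. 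Running both rules out every sign combination except ``$\theta,\psi$ both increasing'' or ``both decreasing,'' which is the claim.

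The main obstacle I anticipate is making the leading-order accounting rigorous rather than heuristic: one must choose the parametrization of shock strengths (here $\xi$-ratios, which is convenient since by Remark \ref{det} the outgoing strengths $B,F$ depend only on the incoming $x,y$ and not on the base state) and then expand $B=B(x,y)$ and $F=F(x,y)$ from \eq{overtaking_B}, \eq{overtaking1} to enough orders to see the strict sign of $\Delta\var\vp$. Because $\ba\phi'(1)=\kappa$, several quantities degenerate at first order and one must push to second order in $(x-1),(y-1)$; the risk is a delicate cancellation forcing a third-order analysis, or a sign that depends on the convexity of $\ba\phi$ (known: $\ba\phi$ convex up, $\fa\phi$ convex down, cf.\ the discussion around \eq{phi_reln}). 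I would handle this by isolating a single clean inequality — roughly, that the \emph{second-order} coefficient in the expansion of $F(x,y)-1$ (the strength of the created forward rarefaction) is strictly positive — and then noting that this created wave carries a definite, non-cancellable amount of $\theta$-variation of sign opposite to what $\psi$ contributes along the incoming shocks, which is exactly what breaks the balance when $\theta$ and $\psi$ disagree in monotonicity.
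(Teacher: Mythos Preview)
Your proposal identifies the right interaction type ($\ba S\ba S$, with outgoing $\ba S\fa R$) and the right mechanism (the created forward rarefaction carries new $\theta$-variation), but there are two substantive gaps.

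\textbf{Incomplete contradiction hypothesis.} You set out to rule out ``one increasing, one decreasing,'' but the negation of the conclusion is broader: one of $\theta,\psi$ could be non-monotone, or monotone but not strictly so. The paper handles this by a short chain of lemmas: first show neither $\theta'$ nor $\psi'$ vanishes on an open interval (via weak $\ba S\ba S$ interactions again); then establish the pointwise sign condition $\psi'(r)\theta'(s)\geq 0$ on $\mathcal H$; then deduce each of $\psi,\theta$ is monotone (else pick $r_1<r_2$ with $\psi'(r_1)\psi'(r_2)<0$, choose $\hat s>r_2$ with $\theta'(\hat s)\neq 0$, and contradict the sign condition); finally combine monotonicity with the no-flat-interval lemma to get strict monotonicity. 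Your sketch omits this global-to-local bridge entirely.

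\textbf{Unnecessary perturbative route.} For the core sign computation you propose a second-order expansion of $B(x,y),F(x,y)$ and worry about delicate cancellations. The paper avoids all of this with an exact algebraic identity. If $\psi'(\bar r)<0<\theta'(\bar s)$ (say), then across every backward shock in the interaction both $\Delta\theta<0$ and $-\Delta\psi<0$, so $\Delta_x\vp,\Delta_y\vp,\Delta_B\vp<0$ without any order analysis; and $\Delta_F\vp=\Delta_F\theta>0$. Removing the absolute values and invoking the conservation relations
\[
\Delta_B\theta+\Delta_F\theta=\Delta_x\theta+\Delta_y\theta,\qquad \Delta_B\psi=\Delta_x\psi+\Delta_y\psi,
\]
gives $\Delta\var\vp=2\Delta_F\theta>0$ exactly. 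Your observation that $\ba\phi'(1)=\kappa$ makes $\Delta_x\theta(s)$ higher order is correct but irrelevant here: the signs of the shock increments are already determined by the assumed opposite monotonicity, so no expansion is needed. The obstacle you anticipate (pushing to second or third order) simply does not arise.
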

\noindent The proof of Proposition \ref{prop_1} will follow from Lemma \ref{lemma1/2} - Lemma \ref{new1}.
\begin{lemma}\label{lemma1/2}
	With the notation and assumptions of Proposition \ref{prop_1}  we have that
	neither $\theta'$ nor $\psi'$ can vanish identically on any open interval.
\end{lemma}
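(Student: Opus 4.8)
The plan is to argue by contradiction using the representation $\vp(r,s)=\theta(s)-\psi(r)$ from Theorem \ref{repr} together with the strict non-degeneracy. Suppose, say, that $\theta'$ vanishes identically on some open interval $(s_1,s_2)$. Then $\vp(r,s)=-\psi(r)$ on the strip $\mathcal{H}\cap\{s_1<s<s_2\}$, so $\del_s\vp\equiv 0$ on an open subset of $\mathcal{H}$. But that directly contradicts strict non-degeneracy of $\vp$ (the set where both $\del_r\vp$ and $\del_s\vp$ are nonzero would miss an open set, hence fail to be dense). The same argument with the roles of $r,s$ exchanged handles the case where $\psi'$ vanishes on an open interval. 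So the statement is essentially a restatement of strict non-degeneracy once one uses the additive representation; the only mild point is to make sure the relevant strip meets $\mathcal{H}$, which it does since $\mathcal{H}=\{s>r\}$ is ``full-dimensional'' in every vertical and horizontal slab.

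Concretely, first I would fix notation: by Theorem \ref{repr} (and the hypothesis that $\vp$ is strictly non-degenerate, hence in particular non-degenerate with $\mathcal{V}'$ dense) we have a single pair $\theta,\psi$ with $\vp(r,s)=\theta(s)-\psi(r)$ throughout $\mathcal{H}$. Next, assume for contradiction $\theta'\equiv 0$ on an open interval $I=(s_1,s_2)$. Consider the open set $W:=\{(r,s)\in\mathcal{H}: s\in I\}$, which is nonempty and open. On $W$ we have $\del_s\vp(r,s)=\theta'(s)=0$. Therefore $W$ is disjoint from the set in \eq{non_deg_strict}, contradicting its density. Symmetrically, if $\psi'\equiv 0$ on an open interval $J=(r_1,r_2)$, then on the nonempty open set $\{(r,s)\in\mathcal{H}: r\in J\}$ we have $\del_r\vp=-\psi'(r)=0$, again contradicting density of the set in \eq{non_deg_strict}. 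This proves the lemma.

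I do not anticipate any real obstacle here; the ``hard part,'' such as it is, is merely bookkeeping — confirming that the strip $W$ (resp.\ the strip in the $r$-direction) is a nonempty open subset of $\mathcal{H}$, which is immediate because $\mathcal{H}=\{s>r\}$ contains points with any prescribed $s$-value and any prescribed $r$-value. One could alternatively phrase the whole argument without invoking the global representation, working directly on a small rectangle as in the proof of Lemma \ref{non_strictly}; but since Theorem \ref{repr} already gives the clean additive form on all of $\mathcal{H}$, the contradiction with strict non-degeneracy is the most economical route.
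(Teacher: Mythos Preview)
Your proof is correct, and it is more direct than the paper's. You observe that once the additive representation $\vp(r,s)=\theta(s)-\psi(r)$ is in hand, the vanishing of $\theta'$ (resp.\ $\psi'$) on an open interval forces $\del_s\vp$ (resp.\ $\del_r\vp$) to vanish on a nonempty open strip in $\mathcal H$, which immediately contradicts strict non-degeneracy \eq{non_deg_strict}. No further input is needed.

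The paper instead argues via the TVD property: assuming $\psi'\equiv 0$ on $(r_1,r_2)$, it first produces a point $\bar s\in(r_1,r_2)$ with $\theta'(\bar s)\neq 0$ (using density), and then runs a weak $\ba S\ba S$ interaction with left state $(\bar r,\bar s)$, exactly as in the proof of Lemma~\ref{non_strictly}, to exhibit an increase in $\var\vp$. The difference is that the paper's argument really only uses the weaker non-degeneracy condition \eq{non_deg} together with the Glimm-type TVD property, whereas you use strict non-degeneracy directly and never invoke TVD. Since Lemma~\ref{non_strictly} has already upgraded non-degeneracy to strict non-degeneracy under the TVD hypothesis, the two routes are logically equivalent in context; yours is simply the shorter path given the hypotheses of Proposition~\ref{prop_1} as stated.
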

\begin{proof}
	Suppose for contradiction that $\psi'\equiv 0$ on an $r$-interval $(r_1,r_2)$. 
	By strict non-degeneracy \eq{non_deg}, there exist an $\bar s\in (r_1,r_2)$ 
	with $\theta'(\bar s)\neq 0$. We can now argue as in the proof of 
	Lemma \ref{non_strictly} to see that the variation of $\vp$ along the solution of weak
	$\ba S\ba S$-interactions with left state $(\bar r,\bar s)$, must necessarily increase.
	Thus, on any open $r$-interval, $\psi'\not\equiv 0$. A similar argument shows that 
	$\theta'\not\equiv 0$ on any open $s$-interval.
\end{proof}

\begin{lemma}\label{lemma1}
	With the notation and assumptions of Proposition \ref{prop_1} we have that
	\beq\label{poss_1}
		\psi'(r)\theta'(s)\geq 0\quad\quad\text{whenever $(r,s)\in\mathcal H$.}
	\eeq
\end{lemma}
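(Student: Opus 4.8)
The plan is to argue by contradiction. Suppose there is a point $(r_0,s_0)\in\mathcal H$ with $\psi'(r_0)\theta'(s_0)<0$; by strict non-degeneracy and continuity of $\psi'$ and $\theta'$ we may assume both are nonzero and of opposite sign in a full neighborhood. Without loss of generality say $\psi'>0$ while $\theta'<0$ on some open rectangle $R=(r_1,r_2)\times(s_1,s_2)\subset\mathcal H$ containing $(r_0,s_0)$. The idea is then to exploit the sign mismatch using an interaction whose outgoing pattern forces a cancellation-then-separation in $\vp=\theta(s)-\psi(r)$: across a shock, by Lemma \ref{change_s_r}, both $r$ and $s$ decrease, so $\psi$ decreases (contributing $+$ to $\vp$ via the minus sign) while $\theta$ also decreases (contributing $-$ to $\vp$); thus along a single backward or forward shock the two parts of $\vp$ move in {\em opposite} directions, which is exactly the configuration in which variation can be created when the shock is split.

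Concretely, I would take a weak $\ba S\ba S$-interaction with left state $(\bar r,\bar s)\in R$, so weak that all states in both the incoming and outgoing wave patterns lie in $R$ and $\psi'>0$, $\theta'<0$ throughout. By the structure of head-on and overtaking interactions recorded in Section \ref{interactions}, the outgoing pattern of an $\ba S\ba S$-interaction is $\ba S\fa R$. Using Lemma \ref{change_s_r}: across the incoming $\ba S\ba S$ both $r$ and $s$ strictly decrease monotonically, so $x\mapsto\psi(r(t,x))$ and $x\mapsto\theta(s(t,x))$ are each strictly monotone before interaction, and since they move oppositely in $\vp$, the quantity $\var\vp^-$ is the {\em sum} of the absolute variations of $\theta\circ s$ and $\psi\circ r$ (there is no interior cancellation because each is individually monotone). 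After interaction, along the outgoing $\ba S$ both $r$ and $s$ drop, while along the outgoing $\fa R$ we have $\Delta r=0$ but $\Delta s>0$ — so $x\mapsto s(t,x)$ is {\em non-monotone} after interaction, hence $x\mapsto\theta(s(t,x))$ is non-monotone (as $\theta'\neq0$ on $R$), and its variation strictly exceeds $|\Delta\theta(s)|$ over the endpoints. Meanwhile $x\mapsto r(t,x)$ remains monotone (only the $\ba S$ changes $r$), so the $\psi$-part contributes exactly $|\Delta\psi(r)|$, and conservation of the Riemann invariants' net change ($\Delta r$ and $\Delta s$ from far-left to far-right are the same before and after) pins these endpoint differences. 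Carefully accounting, $\var\vp^+>\var\vp^-$, contradicting the Glimm-type TVD property.

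I expect the main obstacle to be the bookkeeping that turns ``$s$ becomes non-monotone after interaction'' into a strict, quantitative increase of $\var\vp$ rather than merely a non-decrease: one must verify that the overshoot of $s$ across the outgoing $\fa R$ is genuine (i.e.\ the middle state of the outgoing pattern has an $s$-value strictly outside the interval spanned by the far-left and far-right $s$-values), and that no compensating loss occurs in the $\psi$-part. This should follow from the explicit wave-curve geometry in the $(r,s)$-plane (Figure 3) together with the fact that $\fa R$-waves move only $s$ and increase it, but it requires being precise about which of the two outgoing states lies where. The symmetric case ($\psi'<0$, $\theta'>0$) is handled identically with a weak $\fa S\fa S$-interaction, whose outgoing pattern is again $\ba S\fa R$ — here the roles are such that $r$ becomes non-monotone via the outgoing $\ba S$ relative to... actually more simply, one notes that swapping the roles of $r\leftrightarrow s$ and backward $\leftrightarrow$ forward is a symmetry of the whole setup, so a single case suffices.
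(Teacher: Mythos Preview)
Your approach is essentially the paper's: assume $\psi'\theta'<0$ at a point, localize to a rectangle where the signs are fixed, and run a weak $\ba S\ba S$-interaction there to force $\Delta\var\vp>0$. However, your heuristic explanation contains a sign error that you should fix before the argument becomes rigorous. In your chosen case $\psi'>0$, $\theta'<0$, across a $\ba S$-wave $s$ \emph{decreases}, so $\theta(s)$ \emph{increases}, not decreases. Thus the two summands $\theta(s)$ and $-\psi(r)$ move in the \emph{same} direction in $\vp$ across each incoming shock; $\vp$ is monotone before interaction for that reason, not because of any cancellation. The actual mechanism is that across the outgoing $\fa R$ one has $\Delta_F\psi=0$ but $\Delta_F\theta<0$, so $\vp$ reverses direction there, making $\vp$ non-monotone after interaction while it was monotone before.

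Once you correct the sign, the cleanest way to finish---and what the paper does---is to bypass the qualitative ``$s$ becomes non-monotone'' story and compute directly: with all states in the rectangle one has $\Delta_x\vp,\Delta_y\vp,\Delta_B\vp>0$ and $\Delta_F\vp=\Delta_F\theta<0$, whence
\[
\Delta\var\vp=\Delta_B\vp-\Delta_F\vp-\Delta_x\vp-\Delta_y\vp=-2\Delta_F\vp=-2\Delta_F\theta>0,
\]
using only $\Delta_B\vp+\Delta_F\vp=\Delta_x\vp+\Delta_y\vp$ (same endpoints). This disposes of your worry about turning non-monotonicity into a strict increase; no geometric overshoot estimate is needed. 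Two minor points: the paper handles \emph{both} sign cases with the same $\ba S\ba S$-interaction (the computation gives $\pm2\Delta_F\theta>0$ either way), so there is no need to switch to $\fa S\fa S$; and the outgoing pattern of $\fa S\fa S$ is $\ba R\fa S$, not $\ba S\fa R$, though your symmetry remark is the right way to see that one case suffices.
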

\begin{proof}
	If not there exist a point $(\bar r,\bar s)\in\mathcal H$ such that either
	\beq\label{imposs_1}
		\psi'(\bar r)< 0 \quad\text{and}\quad\theta'(\bar s)> 0,
	\eeq
	or 
	\beq\label{imposs_2}
		\psi'(\bar r)>0\quad\text{and}\quad\theta'(\bar s)< 0.
	\eeq
	Suppose \eq{imposs_1} holds. Then, by continuity, there are intervals 
	$(r_1,\, r_2)\ni \bar r$ and $(s_1,\, s_2)\ni\bar s$ with
	\[\psi'(r)<0 \text{ on $(r_1,\, r_2)$}\quad 
	\text{and} \quad \theta'(s)>0\text{ on $(s_1,\, s_2)$.}\]
	If necessary we reduce the lengths of these intervals so as to have 
	$(r_1,\, r_2)\times (s_1,\, s_2)\subset \mathcal H$.
	We then let $(\bar r,\bar s)$ be the left state in an $\ba S \ba S$-interaction and choose 
	the strengths of the two incoming shocks sufficiently weak to guarantee that all 
	$(r,s)$-states in the solution of the interaction belong to $(r_1,\, r_2)\times (s_1,\, s_2)$.
	Let $x$, $y$, $B$ and $F$ denote the $\xi$-ratios (see \eq{xi_ratio}) across the incoming left shock, 
	incoming right shock, outgoing backward shock, and outgoing forward rarefaction, respectively. See
	the right diagram in Figure 4. By parts (i) and (ii) of Lemma \ref{change_s_r} we have
	\begin{align}
		\Delta \var  \vp&=|\Delta_B \vp|+|\Delta_F \vp|-|\Delta_x \vp|-|\Delta_y \vp |\nonumber\\
		&= |\Delta_B\theta(s)-\Delta_B \psi(r)|+|\Delta_F \theta(s)-\cancelto{0}{\Delta_F\psi(r)}|
		-|\Delta_x \theta(s)-\Delta_x\psi(r)|-|\Delta_y \theta(s)-\Delta_y\psi(r) |\nonumber\\
		&=-\Delta_B \theta+\Delta_B \psi+\Delta_F \theta+\Delta_x \theta-\Delta_x \psi
		+\Delta_y \theta -\Delta_y \psi \nonumber\\
		&=2\Delta_F \theta >0,
	\end{align}
	where we have used that 
	\beq\label{SS_relns}
		\Delta_B \theta(s)+\Delta_F \theta(s)=\Delta_x \theta(s)+\Delta_y \theta(s)\qquad\text{and}\qquad
		\Delta_B\psi(r) =\Delta_x \psi(r)+\Delta_y \psi(r).
	\eeq
	This contradicts the assumed TVD property of $\vp$. 
	A similar argument (again for an $\ba S \ba S$-interaction) shows that \eq{imposs_2} contradicts the 
	assumed TVD property.
\end{proof}	
\begin{lemma}\label{lemma2}
	With the notation and assumptions of Proposition \ref{prop_1} we have that
	$\theta(s)$ and $\psi(r)$ cannot change their monotonicity:
	\beq\label{lemma21}
		\forall\, r_1,\, r_2\in\RR:\qquad \psi'(r_1)\psi'(r_2)\geq 0,
	\eeq
	and 
	\beq\label{lemma22}
		\forall\, s_1,\, s_2\in\RR:\qquad \theta'(s_1)\theta'(s_2)\geq 0.
	\eeq
\end{lemma}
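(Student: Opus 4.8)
The plan is to combine Lemma~\ref{lemma1} (which says $\psi'\theta'\geq0$ everywhere) with Lemma~\ref{lemma1/2} (which says neither $\psi'$ nor $\theta'$ vanishes on any open interval) to deduce that $\psi'$ cannot change sign and $\theta'$ cannot change sign. First I would argue for \eqref{lemma21}. Suppose for contradiction that there are $r_1,r_2$ with $\psi'(r_1)>0>\psi'(r_2)$. By Lemma~\ref{lemma1/2} we can also find an $\bar s$ with $\theta'(\bar s)\neq0$; say $\theta'(\bar s)>0$ (the case $\theta'(\bar s)<0$ is symmetric, handled by replacing $\vp$ by $-\vp$, which is still a Glimm-type TVD field). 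But then, pairing $\bar s$ with $r_2$, we have $\psi'(r_2)\theta'(\bar s)<0$, directly contradicting \eqref{poss_1} of Lemma~\ref{lemma1}. Hence $\psi'$ cannot take both strictly positive and strictly negative values. The argument for \eqref{lemma22} is identical with the roles of $r$ and $s$ interchanged.

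The only subtlety is that $\psi'$ or $\theta'$ might a priori vanish at isolated points (or on closed nowhere-dense sets), so that "cannot take both signs" does not immediately give a literal product inequality at arbitrary points. But \eqref{lemma21} and \eqref{lemma22} as stated only assert $\psi'(r_1)\psi'(r_2)\geq0$, which is automatically satisfied at any point where $\psi'$ vanishes, and follows from the sign-constancy argument at points where $\psi'$ does not vanish. So there is no real obstacle here; the statement is exactly calibrated to what the two preceding lemmas deliver. I would make sure to note that Lemma~\ref{lemma1/2} is what rules out the degenerate scenario in which, say, $\theta'$ vanishes on an open set — because then we could not pick the witness $\bar s$ and the contradiction with Lemma~\ref{lemma1} would not be available.

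I expect the proof to be short. The main (and essentially only) step is the contradiction argument above: assume a sign change in $\psi'$, produce via Lemma~\ref{lemma1/2} a point $\bar s$ where $\theta'\neq 0$, and observe that one of the two pairings $(\,\cdot\,,\bar s)$ violates $\psi'\theta'\geq0$. No new interaction analysis is needed — all the work was already done in Lemmas~\ref{lemma1/2} and~\ref{lemma1}. The hardest part, if any, is merely being careful about the logical form of the conclusion (product $\geq 0$ rather than strict sign constancy) so that the isolated zeros of $\psi'$ and $\theta'$ cause no trouble; this is handled by observing the inequality is trivial wherever a derivative vanishes. Once \eqref{lemma21} and \eqref{lemma22} are in hand, together with the non-vanishing-on-intervals property from Lemma~\ref{lemma1/2}, one gets that $\psi$ and $\theta$ are each strictly monotone, and \eqref{poss_1} forces their monotonicities to agree — which is precisely Proposition~\ref{prop_1}.
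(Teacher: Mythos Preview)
Your approach is essentially the paper's: combine Lemma~\ref{lemma1/2} and Lemma~\ref{lemma1} to rule out a sign change in $\psi'$ (and symmetrically in $\theta'$). However, there is one genuine gap. Lemma~\ref{lemma1} only asserts $\psi'(r)\theta'(s)\geq 0$ for $(r,s)\in\mathcal H$, i.e.\ for $s>r$. When you ``pair $\bar s$ with $r_2$'' you need $\bar s>r_2$, and you have not arranged this: you merely invoke Lemma~\ref{lemma1/2} to produce \emph{some} $\bar s$ with $\theta'(\bar s)\neq 0$, which could lie below both $r_1$ and $r_2$. The paper handles this by ordering $r_1<r_2$ and then choosing $\hat s\in(r_2,\infty)$ with $\theta'(\hat s)\neq 0$; this is possible precisely because Lemma~\ref{lemma1/2} forbids $\theta'$ from vanishing on the open interval $(r_2,\infty)$. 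With $\hat s>r_2>r_1$, both $(r_1,\hat s)$ and $(r_2,\hat s)$ lie in $\mathcal H$, and whichever sign $\theta'(\hat s)$ has, one of these two points violates \eq{poss_1}.

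Once you insert this choice of $\bar s>\max(r_1,r_2)$, your argument goes through and matches the paper's. Two minor comments: (i) the detour through $-\vp$ for the case $\theta'(\bar s)<0$ is unnecessary --- just pair $\bar s$ with $r_1$ instead of $r_2$; (ii) your remarks about isolated zeros and the $\geq 0$ formulation are correct but not needed for the lemma as stated, since the contradiction argument already yields exactly \eq{lemma21}.
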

\begin{proof}
	For contradiction assume that \eq{lemma21} does not hold: there are $r_1,\, r_2\in\RR$ 
	with $r_1<r_2$ and such that $\psi'(r_1)\psi'(r_2)< 0$. Then, either
	\beq\label{opt1}
		\psi'(r_1)<0<\psi'(r_2),
	\eeq
	or
	\beq\label{opt2}
		\psi'(r_2)<0<\psi'(r_1).
	\eeq
	Since $\theta'$ cannot vanish identically on $(r_2,\infty)$ (by Lemma \ref{lemma1/2}), 
	there exists $\hat s\in (r_2,\infty)$ with $\theta'(\hat s)\neq 0$. We then have:
	\begin{itemize}
		\item if $\theta'(\hat s)>0$ and \eq{opt1} holds, or if $\theta'(\hat s)<0$ and \eq{opt2} holds,
		then the point $(r_1,\hat s)\in\mathcal H$ yields a contradiction with Lemma \ref{lemma1};
		\item if $\theta'(\hat s)<0$ and \eq{opt1} holds, or if $\theta'(\hat s)>0$ and \eq{opt2} holds,
		then the point $(r_2,\hat s)\in\mathcal H$ yields a contradiction with Lemma \ref{lemma1}.
	\end{itemize}
	This shows that \eq{lemma21} must hold. An analogous argument shows that \eq{lemma22}
	holds.
\end{proof}	
\begin{lemma}\label{new1}
	With the notation and assumptions of Proposition \ref{prop_1} we have that $\psi(r)$
	and $\theta(s)$ are both strictly monotonic.
\end{lemma}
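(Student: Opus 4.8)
The plan is to combine the three preceding lemmas. By Lemma \ref{lemma1/2}, neither $\psi'$ nor $\theta'$ vanishes identically on any open interval, so each of them is nonzero on a dense set; by Lemma \ref{lemma2}, $\psi'$ does not change sign (and likewise $\theta'$). These two facts together say that $\psi'$ is either $\geq 0$ everywhere with $\psi' > 0$ on a dense set, or $\leq 0$ everywhere with $\psi' < 0$ on a dense set — and similarly for $\theta'$. This already forces $\psi$ and $\theta$ to be (weakly) monotone; the only thing left to upgrade is from weak to strict monotonicity, i.e.\ to rule out that $\psi'$ (say) is nonnegative everywhere but vanishes on a nonempty closed set with empty interior — for instance a single point, or a Cantor-like set.

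The step I expect to be the crux is exactly this upgrade: ruling out isolated (or nowhere-dense) zeros of $\psi'$. Here I would exploit strict non-degeneracy of $\vp$ more forcefully. Suppose $\psi'(r_0) = 0$ while $\psi' \geq 0$ and is positive on a dense set. Then $r_0$ is a limit of points where $\psi' > 0$. I would pick $\bar r$ slightly to one side of $r_0$ with $\psi'(\bar r) > 0$, pick $\bar s$ with $\theta'(\bar s) \neq 0$ (possible by Lemma \ref{lemma1/2}), and run a weak $\ba S\ba S$-interaction with left state $(\bar r, \bar s)$, exactly as in Lemma \ref{lemma1}. The point is that in the computation there, $\Delta\var\vp = 2\Delta_F\theta$ came out with a definite sign because $\psi'$ had a single sign and $\theta'$ had the opposite sign on the relevant window; with $\psi'$ and $\theta'$ now forced by Lemma \ref{lemma1} to have the \emph{same} sign, the analogous bookkeeping should instead show that $\var\vp$ is \emph{preserved} to leading order across such a weak interaction — so a single weak $\ba S\ba S$-interaction is not enough to detect a zero of $\psi'$. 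Instead I would place $r_0$ \emph{inside} the window of $r$-values swept by the interaction: arrange the incoming left shock to cross $r_0$, so that $\Delta_x\psi$ picks up essentially no contribution from near $r_0$ while $\Delta_x\theta$ (controlled by $\xi$-ratios, hence by Lemma \ref{change_s_r} and the wave-curve geometry) does not degenerate. Because the $\xi$-ratios $B,F,x,y$ are determined by $x,y$ alone (Remark \ref{det}), the $r$- and $s$-increments across the outgoing waves are rigidly tied to those across the incoming ones via the relations \eq{SS_relns}; a local flattening of $\psi$ that is not matched by a flattening of $\theta$ will then unbalance the cancellation, just as in Lemma \ref{lemma1}, and produce $\Delta\var\vp > 0$.

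Concretely, the key steps in order are: (1) from Lemmas \ref{lemma1/2} and \ref{lemma2}, deduce that $\psi'$ and $\theta'$ each have a constant (weak) sign and are nonzero on dense sets, hence $\psi$ and $\theta$ are weakly monotone; (2) after Lemma \ref{lemma1}, normalize so that $\psi' \geq 0$ and $\theta' \geq 0$; (3) assume for contradiction that $\psi'$ vanishes somewhere, say on a nonempty set $Z \subset \RR$ with $\mathrm{int}\,Z = \emptyset$, and let $r_0 \in Z$; (4) choose $\bar s$ with $\theta'(\bar s) > 0$ and an interaction window $(r_1,r_2)\times(s_1,s_2) \subset \mathcal H$ containing $(r_0,\bar s)$ on which $\theta' > 0$ while $\psi' \geq 0$ with $\psi'(r_0) = 0$; (5) run an $\ba S\ba S$-interaction whose incoming-wave $\xi$-ratios are tuned so that the swept $r$-range straddles $r_0$, and compute $\Delta\var\vp$ using Lemma \ref{change_s_r}, \eq{delta_h}, \eq{delta_k} and \eq{SS_relns}; (6) conclude $\Delta\var\vp > 0$, contradicting the TVD property, so $Z = \emptyset$ and $\psi' > 0$ everywhere; (7) repeat the symmetric argument with $\fa S\fa S$-interactions to get $\theta' > 0$ everywhere. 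The main obstacle is step (5): making the interaction geometry do the work requires checking that the increment of $\theta$ across the outgoing forward rarefaction stays bounded away from zero while the $\psi$-increments lose mass near $r_0$ — essentially a quantitative version of the sign computation in Lemma \ref{lemma1}, carried out with $\psi$ and $\theta$ now having the \emph{same} monotonicity rather than opposite.
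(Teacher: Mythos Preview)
You have misread what ``strictly monotonic'' means. A function $\psi$ is strictly monotone if $r_1<r_2\Rightarrow\psi(r_1)<\psi(r_2)$ (or $>$); this does \emph{not} require $\psi'\neq 0$ everywhere (think of $r\mapsto r^3$). Your step~(1) already proves the lemma: once Lemma~\ref{lemma2} gives $\psi'\geq 0$ (say) and Lemma~\ref{lemma1/2} forbids $\psi'\equiv 0$ on any open interval, strict monotonicity is immediate, since if $\psi(r_1)=\psi(r_2)$ with $r_1<r_2$ then weak monotonicity forces $\psi$ constant on $[r_1,r_2]$, hence $\psi'\equiv 0$ on $(r_1,r_2)$, a contradiction. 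This is exactly the paper's two-line argument.

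Everything from your step~(3) onward is therefore aimed at a stronger statement --- that $\psi'>0$ pointwise everywhere --- which the lemma does not assert and the rest of the paper does not need. (Downstream, Proposition~\ref{prop_1} and the Case analysis only ever require $\psi'>0$ on suitable intervals, which follows from strict monotonicity plus continuity and Lemma~\ref{lemma1/2}.) Beyond being unnecessary, your proposed interaction argument for this stronger claim is not actually carried out: you acknowledge step~(5) as the ``main obstacle'' and only sketch heuristically why a flattening of $\psi$ near $r_0$ should unbalance the cancellation in \eq{SS_relns}. In fact the sign computation from Lemma~\ref{lemma1}, redone with $\psi'\geq 0$ and $\theta'>0$ on the window, gives $\Delta\var\vp=0$ exactly (not merely to leading order), so a single isolated zero of $\psi'$ produces no violation of the TVD property via that mechanism; your outline does not supply an alternative one.
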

\begin{proof}	
	By Lemma \ref{lemma2} $\psi$ is a monotone function.
	Lemma \ref{lemma1/2} then gives that $\psi$ cannot take the same value twice and it 
	is thus strictly monotonic. The same argument applies to $\theta$.
\end{proof}	
\noindent{\bf Proof of Proposition \ref{prop_1}:} 
Without loss of generality assume that $\psi(r)$ is strictly increasing. 
If $\theta(s)$ is not strictly increasing then it is strictly decreasing, by Lemma \ref{new1}.
Thus there exists an open $s$-interval $I$ such that $\theta'(s)<0$ for $s\in I$. Since $\psi(r)$ is 
strictly increasing there is an $\bar r\in I$ with $\psi'(\bar r)>0$.
But then $\psi'(\bar r)\theta'(s)<0$ for $s\in I,\, s>\bar r$, contradicting Lemma \ref {lemma1}.
A similar argument applies if $\psi(r)$ is strictly decreasing. 
This concludes the proof of Proposition \ref{prop_1}.
\qed

\medskip

\subsection{Step 2: Completing the proof of Theorem \ref{non_exist}}\label{2nd_part}
As already indicated we shall assume, for contradiction, that $\vp:\mathcal H\to \RR$ is 
a strictly non-degenerate $C^2$-smooth TVD field of Glimm type. 
Theorem \ref{repr} and Proposition \ref{prop_1} provide functions $\psi(r)$ and $\theta(s)$
with the same strict monotonicity, and such that the representation \eq{repr2} holds throughout
$\mathcal H$.

Since $\var  \vp=\var (-\vp)$ we need only consider the case where both $\psi(r)$ and 
$\theta(s)$ are strictly increasing. There are then three mutually exclusive possibilities: 
\begin{itemize}
	\item[] {\bf Case 1:} \quad  $\exists\, a\in \RR$ such that $\theta'(a)>\psi'(a)>0$, or 
	\item[] {\bf Case 2:} \quad  $\exists\, a\in \RR$ such that $\psi'(a)>\theta'(a)>0$, or
	\item[] {\bf Case 3:} \quad  $\psi=\theta+C$ for some constant $C$.
\end{itemize}
In each case we shall show that there is an interaction for which $\Delta \var \vp>0$, 
contradicting the assumed TVD property of $\vp$. This will then complete the proof 
of Theorem \ref{non_exist}.
The interactions we consider are $\ba S\ba S$, $\fa S\fa S$, and $\ba R\ba S$ for 
each of Cases 1, 2, and 3, respectively. 

\medskip

	\paragraph{{\bf Case 1}}
By continuity there is an open interval $J$ and constants $M>\delta>0$ such that 
\beq\label{geng_case1_0}
	\theta^{\prime}(s)>M>\psi^\prime(r)+\delta>\delta\qquad\text{whenever $r,\, s\in J$.}
\eeq
We consider an $\ba S\ba S$-interaction in which we first choose the $\xi$-ratios 
$x$ and $y$ across the incoming left and right shocks, respectively (see Figure 4, right diagram). 
The $\xi$-ratios across the outgoing backward shock and the outgoing forward rarefaction are denoted $B$ 
and $F$, respectively. These are uniquely determined by $x$ and $y$ alone, according 
to \eq{overtaking1}-\eq{overtaking2}, and the results in Section \ref{interactions} show that
$x,\, y,\, F,\, B>1$.  The far left state $(\bar u,\, \bar \xi)$ will then be chosen appropriately; see Remark \ref{det}.  

By \eq{bkwd_wave} the 
$(u,\, \xi)$-state between the two incoming waves is given by
\beq
	\tilde{\xi}=x\bar\xi\qquad \text{and}\qquad \tilde{u}=\bar u-\ba\phi(x)\bar \xi,
\eeq
and the $(u,\, \xi)$-state on the far right is given by
\beq
	\xi=y\tilde\xi\qquad \text{and}\qquad u=\tilde u-\ba\phi(y)\tilde \xi.
\eeq
We proceed to calculate the changes in $\vp(r,s)$ across the incoming waves $x$ and $y$.
According to \eq{delta_h} and \eq{delta_k} we have 
\begin{align}
	\Delta_x\vp &= \Delta_x \theta-\Delta_x \psi\nonumber\\
	&=-\bar \xi \int_1^x \Big[ \theta^\prime \big( \bar u-\ba\phi(\sigma)\bar \xi+ \kappa \sigma\bar\xi \big)
	(\ba\phi^\prime(\sigma)-\kappa  ) -  \psi^{\prime}\big( \bar u-\ba\phi(\sigma)\bar \xi- \kappa \sigma\bar\xi) 
	\big(\ba\phi^\prime(\sigma)+\kappa  )\Big] \, d\sigma,\label{geng_case1_1}
\end{align}
and 
\begin{align}
	\Delta_y\vp &= \Delta_y \theta-\Delta_y \psi\nonumber\\
	&=-\tilde \xi \int_1^y \Big[ \theta^\prime \big( \tilde u-\ba\phi(\sigma)\tilde \xi+ \kappa \sigma\tilde\xi \big)
	(\ba\phi^\prime(\sigma)-\kappa  ) -  \psi^{\prime}\big( \tilde u-\ba\phi(\sigma)\tilde \xi- \kappa \sigma\tilde\xi) 
	\big(\ba\phi^\prime(\sigma)+\kappa  )\Big]\, d\sigma.\label{geng_case1_2}
\end{align}
We fix $x,\, y>1$ such that
\beq\label{geng_case1_3}
	\ba\phi(x)>\kappa\left(\textstyle\frac{2M}{\delta}-1\right)(x-1) \qquad\text{and}\qquad
	\ba\phi(y)>\kappa\left(\textstyle\frac{2M}{\delta}-1\right)(y-1),
\eeq
which is possible according to \eq{phi_prime_infty}${}_1$ since $M>\delta$.
Next, fix any $\bar u\in J$ and $\bar\xi>0$ so small that
\beq\label{small_1}
	\bar u-\ba\phi(\sigma)\bar \xi\pm\kappa\sigma\bar \xi\in J\qquad\text{for all $\sigma\in[1,x]$,}
\eeq
and
\beq\label{small_2}
	\tilde u-\ba\phi(\sigma)\tilde \xi\pm\kappa\sigma\tilde \xi
	=\bar u-\left(\ba\phi(x)+\ba\phi(\sigma)x\mp\kappa\sigma x\right)\bar \xi \in J
	\qquad\text{for all $\sigma\in[1,y]$.}
\eeq
This guarantees that the arguments of $\theta'$ and $\psi'$ in \eq{geng_case1_1} 
and \eq{geng_case1_2} are evaluated at points in $J$.
Finally, the state $(\hat\xi,\hat u)$ between the two outgoing waves is given by \eq{bkwd_wave} as
\[\hat \xi=B\bar\xi \qquad\text{and}\qquad \hat u=\bar u-\ba\phi(B)\bar \xi,\]
where $B$ is determined entirely by $x$ and $y$ (see Remark \ref{det}). The corresponding $r$ 
and $s$ values are $\bar u-\ba\phi(B)\bar\xi\pm\kappa\bar\xi$.

We now combine this with \eq{geng_case1_0} to determine the signs of $\Delta_x\vp$ and 
$\Delta_y\vp$. Applying \eq{geng_case1_0} and \eq{small_1}-\eq{small_2} in \eq{geng_case1_1} 
and \eq{geng_case1_2}, and using that $\ba\phi'(\sigma)>\kappa$ for $\sigma>1$, we get 
\[\Delta_x\vp<\bar \xi \int_1^x \kappa(2M-\delta)-\delta\ba\phi'(\sigma)\,d\sigma 
= \bar\xi\left[\kappa(2M-\delta)(x-1)-\delta\ba\phi(x)\right],\]
and
\[\Delta_y\vp<\tilde \xi \int_1^y \kappa(2M-\delta)-\delta\ba\phi'(\sigma)\,d\sigma 
= \tilde\xi\left[\kappa(2M-\delta)(y-1)-\delta\ba\phi(y)\right].\]
By \eq{geng_case1_3} we therefore have
\[\Delta_x\vp,\, \Delta_y\vp<0.\]
Recalling Lemma \ref{change_s_r} and using that $\theta$ and $\psi$ are both strictly increasing, 
we conclude that
\begin{align*}
	\Delta \var \vp
	&=|\Delta_B\vp|+|\Delta_F\vp|-|\Delta_x\vp|-|\Delta_y\vp|\nonumber\\
	&= |\Delta_B\theta-\Delta_B \psi|+|\Delta_F \theta-\cancelto{0}{\Delta_F\psi}|
	+\Delta_x \vp+\Delta_y\vp\nonumber\\
	&=|\Delta_B\theta-\Delta_B\psi|+\Delta_F \theta+\Delta_x \theta-\Delta_x\psi
	+\Delta_y \theta-\Delta_y\psi  \nonumber\\
	&= | \Delta_B\theta-\Delta_B\psi | + (\Delta_B\theta-\Delta_B\psi) + 2\Delta_F\theta \\
	&\geq 2\Delta_F\theta > 0,
\end{align*}
where we have used \eq{SS_relns}. We have thus provided an $\ba S\ba S$-interaction in 
which the total variation of $\vp$ increases. That is, Case 1 cannot apply if $\vp$ is a strictly non-degenerate 
TVD field of Glimm type.

\bigskip

\paragraph{{\bf Case 2:}} The argument is similar to that of Case 1. For completeness we include the details. 
By continuity there is an open interval $J$ and constants $M>\delta>0$ such that 
\beq\label{geng_case2_0}
	\psi'(r)>M>\theta'(s)+\delta\qquad\text{whenever $r,\, s\in J$.}
\eeq
We then consider an $\fa S\fa S$-interaction for which we first choose the $\xi$-ratios 
$x$ and $y$ across the incoming left and right shocks, respectively. The $\xi$-ratios 
across the outgoing backward rarefaction and the outgoing forward shock are denoted $B$ 
and $F$, respectively. As before these are uniquely determined by $x$ and $y$.
By the results in Section \ref{interactions} we have $0<x,\, y,\, F,\, B<1$. 
The far left state $(\bar u,\, \bar \xi)$ will be chosen appropriately; see Remark \ref{det}.

By \eq{frwd_wave} the 
$(u,\, \xi)$-state between the two incoming waves is given by
\beq
	\tilde{\xi}=x\bar\xi\qquad \text{and}\qquad \tilde{u}=\bar u+\fa\phi(x)\bar \xi,
\eeq
and the $(u,\, \xi)$-state on the far right is given by
\beq
	\xi=y\tilde\xi\qquad \text{and}\qquad u=\tilde u+\fa\phi(y)\tilde \xi.
\eeq
We proceed to calculate the changes in $\vp(r,s)$ across the incoming waves $x$ and $y$.
By using expressions similar to those in \eq{delta_h} and \eq{delta_k}, but now for changes 
across forward waves, we have 
\begin{align}\label{geng_new_4}
	\Delta_x \vp &= \Delta_x \theta-\Delta_x \psi\nonumber\\
	&=\bar \xi \int_x^{1} \Big[-\theta^\prime \big( \bar u+\fa\phi(\sigma)\bar \xi+ 
	\kappa \sigma\bar\xi \big)(\fa\phi^\prime(\sigma)+\kappa ) + \psi^{\prime}\big( 
	\bar u+\fa\phi(\sigma)\bar \xi- \kappa \sigma\bar\xi) \big(\fa\phi^\prime(\sigma)-\kappa )\Big]\, d\sigma,
\end{align}
and
\begin{align}\label{geng_new_5}
	\Delta_y \vp &= \Delta_x \theta-\Delta_x \psi\nonumber\\
	&=\tilde \xi \int_y^{1} \Big[- \theta^\prime \big( \tilde u+\fa\phi(\sigma)\tilde \xi+ 
	\kappa \sigma\tilde\xi \big)(\fa\phi^\prime(\sigma)+\kappa ) + \psi^{\prime}\big( \tilde u
	+\fa\phi(\sigma)\tilde \xi- \kappa t\tilde\xi) \big(\fa\phi^\prime(\sigma)-\kappa )\Big]\, d\sigma.
\end{align}
Using \eq{phi_prime_infty}${}_2$ and $M>\delta$, we fix $x,\, y\in (0,1)$ such that
\beq\label{geng_case2_3}
	\fa\phi(x)<\kappa\left(1-\textstyle\frac{2M}{\delta}\right)(1-x) \qquad\text{and}\qquad
	\fa\phi(y)<\kappa\left(1-\textstyle\frac{2M}{\delta}\right)(1-y).
\eeq
Next, fix any $\bar u\in J$ and choose $\bar\xi>0$ so small that
\beq\label{small_3}
	\bar u+\fa\phi(\sigma)\bar \xi\pm\kappa\sigma\bar \xi\in J\qquad\text{for all $\sigma\in[x,1]$,}
\eeq
and
\beq\label{small_4}
	\tilde u+\fa\phi(\sigma)\tilde \xi\pm\kappa\sigma\tilde \xi
	=\bar u+\left(\fa\phi(x)+\fa\phi(\sigma)x\pm\kappa\sigma x\right)\bar \xi \in J
	\qquad\text{for all $\sigma\in[y,1]$.}
\eeq
This guarantees that the arguments of $\theta'$ and $\psi'$ in \eq{geng_new_4} 
and \eq{geng_new_5} are evaluated at points in $J$.
Finally, the state $(\hat\xi,\hat u)$ between the two outgoing waves is given by \eq{bkwd_wave} as
\[\hat \xi=B\bar\xi \qquad\text{and}\qquad \hat u=\bar u-\ba\phi(B)\bar \xi,\]
where $B$ is determined entirely by $x$ and $y$ (see Remark \ref{det}). The corresponding $r$- 
and $s$-values are $\bar u-\ba\phi(B)\bar\xi\pm\kappa\bar\xi$.

We now combine this with \eq{geng_case2_0} to determine the signs of $\Delta_x\vp$ and 
$\Delta_y\vp$. Applying \eq{geng_case2_0} and \eq{small_3}-\eq{small_4} in \eq{geng_new_4} 
and \eq{geng_new_5}, and using that $\fa\phi'(\sigma)>\kappa$ for $0<\sigma<1$, we get 
\[\Delta_x\vp>\bar \xi \int_x^1 \kappa(\delta-2M)+\delta\fa\phi'(\sigma)\,d\sigma 
= \bar\xi\left[\kappa(\delta-2M)(1-x)-\delta\fa\phi(x)\right],\]
and
\[\Delta_y\vp>\tilde \xi \int_y^1 \kappa(\delta-2M)+\delta\fa\phi'(\sigma)\,d\sigma 
= \tilde\xi\left[\kappa(\delta-2M)(1-y)-\delta\fa\phi(y)\right].\]
By \eq{geng_case2_3} we therefore have
\[\Delta_x\vp,\, \Delta_y\vp>0.\]
Recalling Lemma \ref{change_s_r} and using that $\theta$ and $\psi$ are both strictly increasing, 
we conclude that
\begin{align*}
	\Delta \var (\vp) &=|\Delta_B\vp|+|\Delta_F\vp|-|\Delta_x\vp|-|\Delta_y\vp|\nonumber\\
	&=|\cancelto{0}{\Delta_B\theta}-\Delta_B \psi|+| \Delta_F\theta-\Delta_F\psi| -
	\Delta_x \vp-\Delta_y\vp\nonumber\\
	&=\Delta_B \psi+|\Delta_F\theta-\Delta_F\psi |
	-\Delta_x \theta+\Delta_x\psi-\Delta_y\theta+\Delta_y\psi\\
	&=2\Delta_B\psi+| \Delta_F\theta-\Delta_F\psi | - (\Delta_F\theta-\Delta_F\psi) >0\\
	&\geq 2\Delta_B\psi>0,
\end{align*}
where we have used
\[\Delta_x \theta(s)+\Delta_y \theta(s)=\Delta_F \theta(s)\qquad\text{and}\qquad
\Delta_x \psi(r)+\Delta_y \psi(r) =\Delta_F \psi(r)+\Delta_B \psi(r).\]
This provides an interaction in which the total variation of $\vp$ increases, such that Case 2 cannot
apply if $\vp$ is a strictly non-degenerate TVD field of Glimm type.

\medskip

The only remaining possibility for a strictly non-degenerate TVD field of Glimm type $\vp(r,s)=\theta(s)-\psi(r)$, 
is that Case 3 holds: $\theta(s)$ and $\psi(r)$ differ by a constant. Note that the Liu functional $L(t)$, defined 
in \eq{liu_fncl} for $\gamma=1$, falls into this case with $\theta\equiv \psi=\text{id}$.
However, as we show next, for $\gamma>1$ there are interactions in which the total variation of such 
$\vp$ must necessarily increase.

\bigskip

\paragraph{{\bf Case 3:}}
We consider particular $\ba R\ba S$ interactions. As in Case 1 and Case 2 we first fix the 
$\xi$-ratios across the incoming waves, and then suitably choose the far left state in the 
interaction. We will make use of the observation in Section \ref{overtaking_interactions} 
that the outcome of an $\ba R\ba S$ interaction is $\ba S\fa S$ provided the incoming 
$\ba S$-wave is sufficiently strong relative to the incoming $\ba R$-wave.

We start by fixing any finite, open $s$-interval $I\subset \RR$. Since $\theta$ is assumed to be 
$C^2$ smooth and strictly increasing, there are positive constants $M_U$, $N_L$, 
and $N_U$ such that 
\beq\label{geng_case3_0}
	|\theta''(s)|<M_U,\qquad N_U>\theta'(s)>N_L>0,\qquad \forall\, s\in I.
\eeq
As before we denote the far left state in the interaction by $(\bar u,\, \bar \xi)$, while
$x<1$ and $y>1$ denote the $\xi$-ratios across the incoming rarefaction and shock 
waves, respectively. Again, $B$ and $F$ denote the $\xi$-ratios 
across the outgoing backward and forward waves. We recall that these are functions
of $x$ and $y$ alone. In particular, $B=B(x,y)$ is given as the solution of \eq{overtaking_B},
and we proceed to Taylor expand it about $x=1$.
Differentiating \eq{overtaking_B} with respect to $x$, evaluating at $x=1$, solving for $\del_x B(1,y)$, 
and using that $B(1,y)=y$, we obtain the expansion 
\begin{align}
	B(x,y)&=y+\left(\frac{\ba\phi(y)+\kappa(y+1)}{\ba\phi'(y)+\kappa}\right)(x-1)+O_y\!\left((x-1)^2\right)\nonumber\\
	&=xy+\left(y-\frac{\ba\phi(y)+\kappa(y+1)}{\ba\phi'(y)+\kappa}\right)(1-x)+O_y\!\left((x-1)^2\right),\label{B_tayl}
\end{align}
where $O_y$ indicates that the last term depends also on $y$. We now fix any $\veps>0$ and then 
$y>1$ so large that 
\beq\label{phi_ineq}
	y-\left(\frac{\ba\phi(y)+\kappa(y+1)}{\ba\phi'(y)+\kappa}\right)>\frac{N_U+2\veps}{N_L},
\eeq
where we use the fact recorded earlier in \eq{phi_limit}. We then fix $x<1$ sufficiently close 
to $1$ to guarantee that $B=B(x,y)>1$, such that the outcome of the $\ba R\ba S$-interaction is $\ba S\fa S$. 
If necessary we further increase $x$ towards $1$ so as to guarantee that $\veps(1-x)/N_L+O_y\!\left((x-1)^2\right)>0$.
With these choices we apply \eq{phi_ineq} in \eq{B_tayl} to obtain
\beq\label{intermed_ineq}
	B>xy+\frac{N_U+2\veps}{N_L}(1-x)+O_y\!\left((x-1)^2\right)>xy+\frac{N_U+\veps}{N_L}(1-x).
\eeq
At this point the strengths $x<1$, $y>1$, $B=B(x,y)>1$, and $F(x,y)<1$, are all fixed. 
We finally rewrite \eq{intermed_ineq}, using \eq{overtaking1}, as
\beq\label{B_ineq_2}
	N_LB(1-F)>(N_U+\veps)(1-x).
\eeq

\medskip

We proceed to compute the changes in $\vp(r,s)=\theta(s)-\theta(r)+Const.$ across each of the 
waves in the interaction. 
Applying the identities \eq{delta_h} and \eq{delta_k} for changes across backward waves
we obtain the following expressions
\beq
	\Delta_x \vp = \cancelto{0}{\Delta_x\theta(s)} - \Delta_x \theta(r)
	= -\bar\xi\int_x^1\big(\ba\phi'(\sigma)+\kappa\big)
	\theta'(\bar u -\ba \phi(\sigma)\bar \xi - \kappa \sigma\bar\xi)\, d\sigma,\label{expr_1}
\eeq
\begin{align}
	\Delta_y \vp &= \Delta_y\theta(s) - \Delta_y \theta(r)\nn\\
	&= -\tilde\xi\int_1^y\big(\ba\phi'(\sigma)-\kappa\big) \theta'(\tilde u -\ba \phi(\sigma)\tilde \xi + \kappa \sigma\tilde\xi)
	- \theta'(\tilde u -\ba \phi(\sigma)\tilde \xi - \kappa \sigma\tilde\xi)
	\big(\ba\phi'(\sigma)+\kappa\big)\, d\sigma\nn\\
	&= -\tilde\xi\left[\int_1^y \big(\ba\phi'(\sigma)-\kappa\big) 
	\Big[\int_{\tilde u -\ba \phi(\sigma)\tilde \xi - \kappa \sigma\tilde\xi}^{\tilde u -\ba \phi(\sigma)\tilde\xi+\kappa \sigma\tilde\xi}
	\nqquad\theta''(\mu)\,d\mu\Big]\, d\sigma
	-2\kappa \int_1^y \theta'(\tilde u -\ba \phi(\sigma)\tilde \xi - \kappa \sigma\tilde\xi)\, d\sigma\right],\label{expr_2}
\end{align}
where $\tilde{u}=\bar u-\ba\phi(x)\bar \xi$, $\tilde{\xi}=x\bar\xi$ are the $u$- and $\xi$-values of the state between the 
two incoming waves. Similarly,
\begin{align}
	\Delta_B \vp &= \Delta_B\theta(s) - \Delta_B \theta(r)\nn\\
	&=-\bar\xi\left[\int_1^B \big(\ba\phi'(\sigma)-\kappa\big) 
	\Big[\int_{\bar u -\ba \phi(\sigma)\bar \xi - \kappa \sigma\bar\xi}^{\bar u -\ba \phi(\sigma)\bar\xi+\kappa \sigma\bar\xi}
	\nqquad\theta''(\mu)\,d\mu\Big]\, d\sigma
	-2\kappa \int_1^B \theta'(\bar u -\ba \phi(\sigma)\bar \xi - \kappa \sigma\bar\xi)\, d\sigma\right],\label{expr_3}
\end{align}
The corresponding identities for forward waves yield
\begin{align}
	\Delta_F \vp &= \Delta_F\theta(s) - \Delta_F \theta(r)\nn\\
	&=-\hat\xi\left[\int_F^1 \big(\fa\phi'(\sigma)+\kappa\big) 
	\Big[\int_{\hat u +\fa \phi(\sigma)\hat \xi - \kappa \sigma\hat\xi}^{\hat u +\fa \phi(\sigma)\hat\xi+\kappa \sigma\hat\xi}
	\nqquad\theta''(\mu)\,d\mu\Big]\, d\sigma
	+2\kappa \int_F^1 \theta'(\hat u +\fa \phi(\sigma)\hat \xi - \kappa \sigma\hat\xi)\, d\sigma\right],\label{expr_4}
\end{align}
where
\[\hat{u}=\bar u-\ba\phi(B)\bar \xi,\qquad \hat{\xi}=B\bar\xi\]
are the $u$- and $\xi$-values of the state between the two outgoing waves.

It follows from the expressions for $\tilde \xi$, $\tilde u$, $\hat \xi$, $\hat u$, 
that for $\bar u\in I$ and $\bar\xi$ sufficiently small, all arguments of $\theta'$ and $\theta''$ 
occurring in \eq{expr_1}-\eq{expr_4}, and indeed all $r$- and $s$-values of all states involved 
in the interaction, belong to $I$. Next, it is clear from \eq{expr_1} that 
\beq\label{reln_1}
	\Delta_x\vp<0\,.
\eeq
Furthermore, it follows from \eq{geng_case3_0}, \eq{expr_2}-\eq{expr_3}, that 
\beq\label{reln_2}
	\Delta_y \vp>0\qquad\text{and}\qquad  \Delta_B\vp >0,
\eeq
for $\bar\xi$ sufficiently small. Finally, by using \eq{geng_case3_0}, \eq{expr_1}, and \eq{expr_4},
we have that 
\beq\label{reln_3}
	\Delta_F\vp<\Delta_x\vp,
\eeq
provided $O(1)\bar\xi+N_LB(1-F)>N_U(1-x)$, where the $O(1)$ term depends only on $B$, $F$, $M_U$, and $\kappa$. 
According to \eq{B_ineq_2} this latter condition is satisfied for $\bar\xi$ sufficiently small.
Hence, by \eq{reln_1}-\eq{reln_3} and Lemma \ref{change_s_r}, we obtain that whenever $\bar\xi$ is sufficiently 
small, then
\begin{align}
	\Delta \var  \vp&=|\Delta_B \vp|+|\Delta_F \vp|-|\Delta_x \vp|-|\Delta_y \vp |\nn\\
	&=\big\{\Delta_B \vp - \Delta_F \vp\big\}+\big\{\Delta_x \vp - \Delta_y \vp \big\}\nn\\
	&=\Big\{\big[\Delta_B \theta(s)-\Delta_B \theta(r)\big]-\big[\Delta_F \theta(s)
	-\Delta_F \theta(r)\big]\Big\}
	+\Big\{-\Delta_x \theta(r) -\big[\Delta_y \theta(s)-\Delta_y \theta(r)\big]\Big\} \nn\\
	&=-2\Delta_F \theta(s)-2\Delta_x \theta(r)+2\Delta_F \theta(r) \nn\\
	&= -2\Delta_F\vp + 2\Delta_x\vp>0,\label{pos}
\end{align}
where we have used the identities
\[\Delta_B \theta(s)=\Delta_y \theta(s)-\Delta_F \theta(s),\qquad 
\Delta_y \theta(r)=-\Delta_x \theta(r)+\Delta_B \theta(r)+\Delta_F \theta(r).\]

\medskip

\noindent{\bf Proof of Theorem \ref{non_exist}:} 
Taken together, Case 1, Case 2, and Case 3 show that the system \eq{tau}-\eq{p_syst}
for isentropic flow of an ideal, polytropic gas with $\gamma>1$, does not admit any strictly 
non-degenerate Glimm-type TVD field $\vp$.\qed

\section{Final remarks}
The non-existence of TVD fields for the $p$-system is similar in spirit to Temple's result
\cite{temple_85} that no metric on state space yields $L^1$-contraction for 
systems.\footnote{Temple's result applies to general $2\times 2$-systems 
and was proved by analyzing solutions without interactions.} 
We note that our analysis does not rule out decay for other types of Nishida-like 
functionals such as 
\[\mathfrak N(t):=\underset{\ba S}{\var}\, \vp_1(r(t)) + \underset{\fa S}{\var}\, \vp_2(s(t)),\]
for suitably chosen scalar fields $\vp_1$, $\vp_2$, say. Furthermore, our results do not rule out the existence of
``local'' TVD fields, i.e.\ scalar fields $\vp$ whose variation decays along all solutions with values
in a fixed compact subset of the no-vacuum set $\mathcal H$. However, it remains an open problem 
to prove or disprove that solutions of \eq{tau}-\eq{p_syst} are bounded away from vacuum unless a vacuum
appears immediately at time $0^+$. In connection to this we note that our proofs above utilize sufficiently strong 
interactions, sufficiently close to vacuum.

\begin{bibdiv}
\begin{biblist}
\bib{bakh_70}{article}{
   author={Bakhvalov, N. S.},
   title={The existence in the large of a regular solution of a quasilinear
   hyperbolic system},
   language={Russian},
   journal={\v Z. Vy\v cisl. Mat. i Mat. Fiz.},
   volume={10},
   date={1970},
   pages={969--980},
   issn={0044-4669},
   review={\MR{0279443 (43 \#5165)}},
}
\bib{chang_hsiao}{book}{
   author={Chang, Tung},
   author={Hsiao, Ling},
   title={The Riemann problem and interaction of waves in gas dynamics},
   series={Pitman Monographs and Surveys in Pure and Applied Mathematics},
   volume={41},
   publisher={Longman Scientific \& Technical},
   place={Harlow},
   date={1989},
   pages={x+272},
   isbn={0-582-01378-X},
   review={\MR{994414 (90m:35122)}},
}
\bib{daf}{book}{
   author={Dafermos, Constantine M.},
   title={Hyperbolic conservation laws in continuum physics},
   series={Grundlehren der Mathematischen Wissenschaften [Fundamental
   Principles of Mathematical Sciences]},
   volume={325},
   edition={3},
   publisher={Springer-Verlag},
   place={Berlin},
   date={2010},
   review={\MR{2574377 (2011i:35150)}},
}
\bib{dcl_89}{article}{
   author={Ding, Xia Xi},
   author={Chen, Gui Qiang},
   author={Luo, Pei Zhu},
   title={Convergence of the fractional step Lax-Friedrichs scheme and
   Godunov scheme for the isentropic system of gas dynamics},
   journal={Comm. Math. Phys.},
   volume={121},
   date={1989},
   number={1},
   pages={63--84},
   issn={0010-3616},
   review={\MR{985615 (90d:65168)}},
}
\bib{diperna_73}{article}{
   author={DiPerna, Ronald J.},
   title={Existence in the large for quasilinear hyperbolic conservation
   laws},
   journal={Arch. Rational Mech. Anal.},
   volume={52},
   date={1973},
   pages={244--257},
   issn={0003-9527},
   review={\MR{0338576 (49 \#3340)}},
}
\bib{diperna_83a}{article}{
   author={DiPerna, R. J.},
   title={Convergence of approximate solutions to conservation laws},
   journal={Arch. Rational Mech. Anal.},
   volume={82},
   date={1983},
   number={1},
   pages={27--70},
   issn={0003-9527},
   review={\MR{684413 (84k:35091)}},
   doi={10.1007/BF00251724},
}
\bib{diperna_83b}{article}{
   author={DiPerna, Ronald J.},
   title={Convergence of the viscosity method for isentropic gas dynamics},
   journal={Comm. Math. Phys.},
   volume={91},
   date={1983},
   number={1},
   pages={1--30},
   issn={0010-3616},
   review={\MR{719807 (85i:35118)}},
}
\bib{gl}{article}{
   author={Glimm, James},
   title={Solutions in the large for nonlinear hyperbolic systems of
   equations},
   journal={Comm. Pure Appl. Math.},
   volume={18},
   date={1965},
   pages={697--715},
   issn={0010-3640},
   review={\MR{0194770 (33 \#2976)}},
}
\bib{glimm_lax_70}{book}{
   author={Glimm, James},
   author={Lax, Peter D.},
   title={Decay of solutions of systems of nonlinear hyperbolic conservation
   laws},
   series={Memoirs of the American Mathematical Society, No. 101},
   publisher={American Mathematical Society},
   place={Providence, R.I.},
   date={1970},
   pages={xvii+112},
   review={\MR{0265767 (42 \#676)}},
}
\bib{liu_77a}{article}{
   author={Liu, Tai Ping},
   title={Solutions in the large for the equations of nonisentropic gas
   dynamics},
   journal={Indiana Univ. Math. J.},
   volume={26},
   date={1977},
   number={1},
   pages={147--177},
   issn={0022-2518},
   review={\MR{0435618 (55 \#8576)}},
}
\bib{liu_77b}{article}{
   author={Liu, Tai Ping},
   title={Initial-boundary value problems for gas dynamics},
   journal={Arch. Rational Mech. Anal.},
   volume={64},
   date={1977},
   number={2},
   pages={137--168},
   issn={0003-9527},
   review={\MR{0433017 (55 \#5996)}},
}
\bib{nish_68}{article}{
   author={Nishida, Takaaki},
   title={Global solution for an initial boundary value problem of a
   quasilinear hyperbolic system},
   journal={Proc. Japan Acad.},
   volume={44},
   date={1968},
   pages={642--646},
   issn={0021-4280},
   review={\MR{0236526 (38 \#4821)}},
}
\bib{ns}{article}{
   author={Nishida, Takaaki},
   author={Smoller, Joel A.},
   title={Solutions in the large for some nonlinear hyperbolic conservation
   laws},
   journal={Comm. Pure Appl. Math.},
   volume={26},
   date={1973},
   pages={183--200},
   issn={0010-3640},
   review={\MR{0330789 (48 \#9126)}},
}
\bib{temple_81}{article}{
   author={Temple, J. Blake},
   title={Solutions in the large for the nonlinear hyperbolic conservation
   laws of gas dynamics},
   journal={J. Differential Equations},
   volume={41},
   date={1981},
   number={1},
   pages={96--161},
   issn={0022-0396},
   review={\MR{626623 (82i:35117)}},
   doi={10.1016/0022-0396(81)90055-3},
}
\bib{temple_83}{article}{
   author={Temple, Blake},
   title={Systems of conservation laws with invariant submanifolds},
   journal={Trans. Amer. Math. Soc.},
   volume={280},
   date={1983},
   number={2},
   pages={781--795},
   issn={0002-9947},
   review={\MR{716850 (84m:35080)}},
   doi={10.2307/1999646},
}
\bib{temple_85}{article}{
   author={Temple, Blake},
   title={No $L_1$-contractive metrics for systems of conservation laws},
   journal={Trans. Amer. Math. Soc.},
   volume={288},
   date={1985},
   number={2},
   pages={471--480},
   issn={0002-9947},
   review={\MR{776388 (86h:35084)}},
   doi={10.2307/1999948},
}

\end{biblist}
\end{bibdiv}

\end{document}